\documentclass{amsart}%
\usepackage{amssymb}
\usepackage{amsfonts}%
\usepackage{amsmath}%
\setcounter{MaxMatrixCols}{30}%
\usepackage{graphicx}
%TCIDATA{OutputFilter=latex2.dll}
%TCIDATA{Version=5.50.0.2953}
%TCIDATA{CSTFile=amsartci.cst}
%TCIDATA{Created=Wednesday, September 20, 2017 18:16:45}
%TCIDATA{LastRevised=Friday, February 28, 2020 12:15:35}
%TCIDATA{<META NAME="GraphicsSave" CONTENT="32">}
%TCIDATA{<META NAME="SaveForMode" CONTENT="1">}
%TCIDATA{BibliographyScheme=Manual}
%TCIDATA{<META NAME="DocumentShell" CONTENT="Articles\SW\AMS Journal Article">}
%BeginMSIPreambleData
\providecommand{\U}[1]{\protect\rule{.1in}{.1in}}
%EndMSIPreambleData
\newtheorem{theorem}{Theorem}
\theoremstyle{plain}

\newtheorem{corollary}{Corollary}

\newtheorem{definition}{Definition}
\newtheorem{example}{Example}

\newtheorem{proposition}{Proposition}
\newtheorem{remark}{Remark}

\numberwithin{equation}{section}
\begin{document}
\title[On weakly 1-absorbing primary ideals of commutative rings]{On weakly 1-absorbing primary ideals of commutative rings}
\author{Ayman Badawi}
\address{Department of Mathematics \&Statistics, The American University of Sharjah,
P.O. Box 26666, Sharjah, United Arab Emirates.}
\email{abadawi@aus.edu}
\author{Ece Yetkin Celikel}
\address{Department of Civil Engineering, Faculty of Engineering, Hasan Kalyoncu
University, Gaziantep, Turkey.}
\email{yetkinece@gmail.com}
\subjclass[2000]{Primary 13A15, 13F05; Secondary 05A15, 13G05}
\date{September, 2019}
\keywords{prime ideal, primary ideal, 1-absorbing primary ideal, 2-absorbing primary
ideal, 2-absorbing ideal, weakly prime ideal, weakly primary ideal, weakly
2-absorbing primary ideal, weakly semiprime ideal, n-absorbing ideal}

\begin{abstract}
Let R be a commutative ring with $1\neq0$. In this paper, we introduce the
concept of weakly 1-absorbing primary ideal which is a generalization of
1-absorbing ideal. A proper ideal $I$ of $R$ is called a \textit{weakly
1-absorbing primary} ideal if whenever nonunit elements $a,b,c\in R$ and
$0\neq abc\in I,$ then $ab\in I$ or $c\in\sqrt{I}$. A number of results
concerning weakly 1-absorbing primary ideals and examples of weakly
1-absorbing primary ideals are given. Furthermore, we give the correct version
of a result on 1-absorbing ideals of commutative rings.

\end{abstract}
\maketitle

\section{Introduction}

Throughout this paper, all rings are commutative with nonzero identity. Let
$R$ be a commutative ring. By a proper ideal $I$ of $R$, we mean an ideal $I$
of $R$ with $I\neq R$. Let $I$ be a proper ideal of $R$. Before we state some
results, let us introduce some notation and terminology. By $\sqrt{I},$ we
mean the radical of $R$, that is, $\{a\in R \mid a^{n}\in I$ for some positive
integer $n\}$. In particular, $\sqrt{0}$ denotes the set of all nilpotent
elements of $R$. We define $Z_{I}(R) = \{r \in R \mid rs \in I$ for some $s
\in R\setminus I\}$. A ring $R$ is called a reduced ring if it has no non-zero
nilpotent elements; i.e., $\sqrt{0}=0$. For two ideals $I$ and $J $ of $R,$
the residual division of $I$ and $J$ is defined to be \ the ideal
$(I:J)=\{a\in R \mid aJ\subseteq I\}$. Let $R$ be a commutative ring with
identity and $M$ a unitary $R$-module. Then $R(+)M=R\oplus M$ (direct sum)
with coordinate-wise addition and multiplication $(a,m)(b,n)=(ab,an+bm)$ is a
commutative ring with identity called the idealization of $M$. A ring $R $ is
called a \textit{quasilocal} ring if $R$ has exactly one maximal ideal. As
usual we denote $%
%TCIMACRO{\U{2124} }%
%BeginExpansion
\mathbb{Z}
%EndExpansion
$ and $%
%TCIMACRO{\U{2124} }%
%BeginExpansion
\mathbb{Z}
%EndExpansion
_{n}$ by the ring of integers and the ring of integers modulo $n$.

Since prime and primary ideals have key roles in commutative ring theory, many
authors have studied generalizations of prime and primary ideals. Anderson and
Smith introduced in \cite{And} the notion of weakly prime ideals. A proper
ideal $I$ of $R$ is called a \textit{weakly prime} ideal of $R$ if whenever
$a,b\in R$ and $0\neq ab\in I$, then $a\in I$ or $b\in I$. Then Atani and
Farzalipour introduced the concept of weakly primary ideals which is a
generalization of primary ideals in \cite{At}.\ A proper ideal $I$ of $R$ is
called a \textit{weakly primary} ideal of $R$ if whenever $a,b\in R $ and
$0\neq ab\in I$, then $a\in I$ or $b\in\sqrt{I}$. For a different
generalizations of prime ideals and weakly prime ideals, the contexts of
2-absorbing and weakly 2-absorbing ideals were defined. According to
\cite{Badawi} and \cite{BadDar}, a proper ideal $I$ of $R$ is called a
\textit{2-absorbing} (\textit{weakly 2-absorbing}) ideal of $R$, if whenever
$a,b,c\in I$ and $abc\in I$ ($0\neq abc\in I$)$,$ then $ab\in I$ or $bc\in I$
or $ac\in I.$ As a generalization of 2-absorbing and weakly 2-absorbing
ideals, 2-absorbing primary and weakly 2-absorbing primary ideals were defined
in \cite{Badawi2} and \cite{Badawi3}, respectively. A proper ideal $I $ of $R$
is said to be \textit{2-absorbing primary} (\textit{weakly 2-absorbing
primary})\ if whenever $a,b,c\in R$ and $abc\in I$ ($0\neq abc\in I$)$,$ then
$ab\in I$ or $bc\in\sqrt{I}$ or $ac\in\sqrt{I}.$ In a recent study
\cite{BadEce}, we call a proper ideal $I$ of $R$ a \textit{1-absorbing
primary} ideal if whenever nonunit elements $a,b,c$ $\in R$ and $abc\in I$,
then $ab\in I$ or $c\in\sqrt{I}.$

In this paper, we introduce the concept of weakly 1-absorbing ideal of a ring
$R$. A proper ideal $I$ of $R$ is called a \textit{weakly 1-absorbing primary}
ideal of $R$ if whenever nonunit elements $a,b,c$ $\in R$ and $0 \not = abc\in
I$, then $ab\in I$ or $c\in\sqrt{I}.$ It is clear that a 1-absorbing primary
ideal of $R$ is a weakly 1-absorbing primary ideal of $R$. However, since $0$
is always weakly 1-absorbing primary, a weakly 1-absorbing primary ideal of
$R$ needs not be a 1-absorbing primary ideal of $R$ (see Example \ref{e1}).

Among many results, we show (Theorem \ref{MAX}) that if a proper ideal $I$ of
$R$ is a weakly 1-absorbing ideal of $R$ such that $\sqrt{I}$ is a maximal
ideal of $R$, then $I$ is a primary ideal of $R$, and hence $I$ is 1-absorbing
primary ideal of $R$. We show (Theorem \ref{rad(I)}) that If $R$ is a reduced
ring and $I$ is a weakly 1-absorbing primary ideal of $R$, then that $\sqrt
{I}$ is a prime ideal of $R$. If $I$ is a proper nonzero ideal of a
von-Neumann regular ring $R$, then we show (Theorem \ref{VNI}) that $I$ is a
weakly 1-absorbing primary ideal of $R$ if and only if $I$ is a 1-absorbing
primary ideal of $R$ if and only if $I$ is a primary ideal of $R$. We show
(Theorem \ref{NQ}) that if $R$ be a non-quasilocal ring and $I$ be a proper
ideal of $R$ such that $ann(i) = \{r \in R\mid ri = 0\}$ is not a maximal
ideal of $R$ for every element $i \in I$, then $I$ is a weakly 1-absorbing
primary ideal of $R$ if and only if $I$ is a weakly primary ideal of $R$. If
$I$ is a proper ideal of a reduced divided ring $R$, then we show (Theorem
\ref{d}) that $I$ is a weakly 1-absorbing primary ideal of $R$ if and only if
$I$ is a weakly primary ideal of $R$. If $I$ is a weakly 1-absorbing primary
of a ring $R$ that is not a 1-absorbing primary ideal of $R$, then we give
(Theorem \ref{abI}) sufficient conditions so that $I^{3} = 0$ (i.e., $I
\subseteq\sqrt{I}$). In Theorem \ref{ch}, we obtain some equivalent conditions
for weakly 1-absorbing primary ideals of $u$-rings. We give (Theorem \ref{w1})
a characterization of weakly 1-absorbing primary ideals in $R=R_{1}\times
R_{2}$ where $R_{1}$ and $R_{2}$ are commutative rings with identity that are
not fields. If $R_{1},R_{2},..., R_{n}$ are commutative rings with identity
for some $2 \leq n < \infty$ and $R=R_{1}\times R_{2}\times\cdots\ R_{n}$,
then it is shown (Theorem \ref{fi}) that every proper ideal of $R$ is a weakly
1-absorbing primary ideal of $R, $ if and only if $n = 2$ and $R_{1}, R_{2}$
are fields. For a weakly 1-absorbing primary ideal of a ring $R$, we show
(Theorem \ref{S}) that $S^{-1}I$ \ is a weakly 1-absorbing primary ideal of
$S^{-1}R$ for every multiplicatively closed subset $S$ of $R$ that is disjoint
from $I$, and we show that the converse holds if $S\cap Z(R)=S\cap
Z_{I}(R)=\emptyset.$ We give (Remark \ref{TT}) the correct versions of
\cite[Theorem 17(1), Corrollary 3 and Corollary 4]{BadEce}.

\section{Properties of Weakly 1-absorbing primary ideals}

\begin{definition}
Let $R$ be a commutative ring and $I$ a proper ideal of $R$. We call $I$ a
weakly 1-absorbing primary ideal of $R$ if whenever nonunit elements $a,b,c$
$\in R$ and $0\neq abc\in I$, then $ab\in I$ or $c\in\sqrt{I}.$
\end{definition}

It is clear that every 1-absorbing primary ideal of a ring $R$ is a weakly
1-absorbing primary ideal of $R$, and $I=\{0\}$ is a weakly 1-absorbing
primary ideal of $R.$ The following example shows that the converse is not true.

\begin{example}
\begin{enumerate}
\item \label{e1}$I=\{0\}$ is a weakly 1-absorbing primary ideal of $R =%
%TCIMACRO{\U{2124} }%
%BeginExpansion
\mathbb{Z}
%EndExpansion
_{6}$ that is not a 1-absorbing primary of $R$. Indeed, $2\cdot2\cdot3\in I $
but neither $2\cdot2\in I$ nor $3\in\sqrt{I}.$

\item Let $J=\{0,6\}$ as an ideal of $%
%TCIMACRO{\U{2124} }%
%BeginExpansion
\mathbb{Z}
%EndExpansion
_{12}$ and let $R=%
%TCIMACRO{\U{2124} }%
%BeginExpansion
\mathbb{Z}
%EndExpansion
_{12}(+)J$. Then an ideal $I=\{(0,0),(0,6)\}$ is a weakly 1-absorbing primary
ideal of $R.$ Observe that $abc\in I$ for some $a,b,c\in R\backslash I$ if and
only if $abc=(0,0).$ However it is not a 1-absorbing primary ideal of $R$.
Indeed; $(2,0)(2,0)(3,0)\in I$, but neither $(2,0)(2,0)\in I$ nor
$(3,0)\in\sqrt{I}$.

\item For an infinite example of a weakly 1-absorbing primary ideal that is
not 1-absorbing primary, put $J=\{0,6\}$ an ideal of $%
%TCIMACRO{\U{2124} }%
%BeginExpansion
\mathbb{Z}
%EndExpansion
_{12},\ $and consider the idealization ring $R=%
%TCIMACRO{\U{2124} }%
%BeginExpansion
\mathbb{Z}
%EndExpansion
_{12}(+)J[X]$ and the ideal $I=\{0\}(+)J[X]$. Then $I$ is an infinite ideal of
$R.$ Since $abc\in I$ for some $a,b,c\in R\backslash I$ if and only if
$abc=(0,0)$, then $I$ is a weakly 1-absorbing primary ideal of $R$.
\end{enumerate}
\end{example}

We begin with the following trivial result without proof.

\begin{theorem}
\label{tr}Let $I$ be a proper ideal of a commutative ring $R$. Then the
following statements hold.

\begin{enumerate}
\item If $I$ is a weakly prime ideal, then $I$ is a weakly 1-absorbing primary ideal.

\item If $I$ is a weakly primary ideal, then $I$ is a weakly 1-absorbing
primary ideal.

\item If $I$ is a 1-absorbing primary ideal, then $I$ is a weakly 1-absorbing
primary ideal.

\item If $I$ is a weakly 1-absorbing primary ideal, then $I$ is a weakly
2-absorbing primary ideal.

\item If $R/$ is an integral domain, then $I$ is a weakly 1-absorbing primary
ideal if and only if $I$ is a 1-absorbing primary ideal of $R.$

\item Let $R$ be a quasilocal ring with maximal ideal $\sqrt{0}.$ Then every
proper ideal of $R$ is a weakly 1-absorbing primary ideal of $R$.
\end{enumerate}
\end{theorem}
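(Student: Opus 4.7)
The author labels this as a ``trivial result without proof,'' so the proof proposal should be a short, mechanical tour of the six parts. Each part is essentially a direct application of the relevant definition, and the only subtle case is item (4). The plan is to handle (1)--(3) by observing that the hypotheses of the respective definitions immediately imply the hypothesis of ``weakly 1-absorbing primary,'' then address (4) by a small case split on which of $a,b,c$ are units, and finally reduce (5) to the definition of an integral domain and (6) to the fact that in a quasilocal ring the set of nonunits coincides with the maximal ideal.

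For (1), given nonunits $a,b,c\in R$ with $0\neq abc\in I$, set $x=ab$, so $0\neq x\cdot c\in I$; weak primeness gives $x\in I$ or $c\in I\subseteq\sqrt{I}$, which is exactly what is needed. For (2), the same grouping works: weak primariness applied to $(ab)\cdot c$ gives $ab\in I$ or $c\in\sqrt{I}$. For (3), the condition defining ``1-absorbing primary'' is strictly stronger than ``weakly 1-absorbing primary'' (the latter only assumes $abc\neq 0$), so the implication is immediate from the definitions.

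The one part that deserves a line of thought is (4). Suppose $0\neq abc\in I$. If $a,b,c$ are all nonunits, the weakly 1-absorbing primary hypothesis gives $ab\in I$ or $c\in\sqrt{I}$, hence certainly $ab\in I$ or $ac\in\sqrt{I}$ or $bc\in\sqrt{I}$. If instead one of $a,b,c$ is a unit, multiplying $abc\in I$ by its inverse shows the product of the other two lies in $I\subseteq\sqrt{I}$, and again one of the three required conclusions holds. So $I$ is weakly 2-absorbing primary.

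For (5), the nontrivial direction is to show that a weakly 1-absorbing primary ideal in a domain is already 1-absorbing primary. Assume nonunits $a,b,c$ satisfy $abc\in I$. If $abc\neq 0$, the weak version gives the conclusion; if $abc=0$, then since $R$ is a domain one of $a,b,c$ equals $0$, so either $ab=0\in I$ or $c=0\in\sqrt{I}$. For (6), if $R$ is quasilocal with maximal ideal $\sqrt{0}$, every nonunit is nilpotent, so for any proper ideal $I$ and any nonunits $a,b,c$ with $0\neq abc\in I$ we have $c\in\sqrt{0}\subseteq\sqrt{I}$ automatically. The only potential obstacle in the entire theorem is making sure the unit-case reduction in (4) is stated cleanly; everything else is unwrapping definitions.
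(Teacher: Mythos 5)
Your proof is correct, and since the paper deliberately omits the proof as trivial, your mechanical unwrapping of the definitions — including the necessary unit case split in (4) and the use of the domain hypothesis to handle $abc=0$ in (5) — is exactly the argument the authors intend. No gaps.
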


We recall that a proper ideal $I$ of $R$ is called a \textit{semiprimary}
ideal of $R$ if $\sqrt{I}$ is a prime ideal of $R$. For an interesting article
on semiprimary ideals of commutative rings see \cite{GilmerR}. For a recent
related article on semiprimary ideals, we recommend \cite{BadDenis}. We have
the following result.

\begin{theorem}
\label{MAX} Let $R$ be a ring and $I$ be a weakly 1-absorbing primary ideal of
$R$. If $\sqrt{I}$ is a maximal ideal of $R$, then $I$ is a primary ideal of
$R$, and hence $I$ is a 1-absorbing ideal primary of $R$. In particular, If
$I$ a weakly 1-absorbing primary ideal of $R$ that is not a 1-absorbing ideal
primary of $R$, then $\sqrt{I}$ is not a maximal ideal of $R$.
\end{theorem}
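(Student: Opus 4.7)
The plan is to prove that $I$ is primary; the 1-absorbing primary conclusion then follows immediately (for nonunits $a,b,c$ with $abc\in I$ and $c\notin\sqrt{I}$, apply the primary property to $(ab)\cdot c\in I$ to obtain $ab\in I$), and the ``in particular'' statement is just the contrapositive. A striking observation is that the weakly 1-absorbing primary hypothesis is not actually needed for this implication---it is the classical fact that an ideal whose radical is a maximal ideal is automatically primary.

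To carry out the main step, I would suppose $xy\in I$ and $y\notin\sqrt{I}=M$ and argue that $x\in I$. Since $M$ is maximal and $y\notin M$, we have $R=(y)+M$, so $1=ay+m$ for some $a\in R$ and some $m\in M$. The binomial theorem yields
\begin{equation*}
1=(ay+m)^{n}=m^{n}+y\,c,
\end{equation*}
where $c\in R$ collects the terms carrying a factor of $ay$. Choose $n$ large enough that $m^{n}\in I$, which is possible because $m\in\sqrt{I}$. Multiplying by $x$ gives
\begin{equation*}
x=xm^{n}+c(xy),
\end{equation*}
and both summands lie in $I$: the first because $m^{n}\in I$, the second because $xy\in I$. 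Hence $x\in I$, so $I$ is primary, and the theorem follows.

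The only real obstacle is recognizing that no case analysis or appeal to the defining property of weakly 1-absorbing primary ideals is needed. An alternative argument that does use the hypothesis would proceed by contradiction, writing $1=ay+m$ as above and splitting on whether $xym\neq 0$ (in which case the defining property applied with $y$ singled out, together with $y\notin\sqrt{I}$, forces $xm\in I$ and hence $x=a(xy)+xm\in I$) or $xym=0$ (in which case iterating $xm=xm(ay+m)=xm^{2}=xm^{3}=\cdots=xm^{n}\in I$ closes the argument); but this detour is strictly longer than the direct approach above.
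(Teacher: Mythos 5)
Your proposal is correct and follows the same route as the paper: both observe that the weakly 1-absorbing primary hypothesis is superfluous here and that the conclusion reduces to the classical fact that an ideal whose radical is maximal is primary. The only difference is that the paper cites Zariski--Samuel for this fact, whereas you prove it directly via $1=ay+m$ and the binomial expansion, which is a correct and self-contained version of the same argument.
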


\begin{proof}
Suppose that $\sqrt{I}$ is a maximal ideal of $R$. Then $I$ is a semiprimary
ideal of $R$. Since $I$ is a semiprimary ideal of $R$ and $\sqrt{I}$ is a
maximal ideal of $R$, we conclude that $I$ is a primary ideal of $R$ by
\cite[P. 153]{OP}. Thus $I$ is a 1-absorbing primary ideal of $R$.
\end{proof}

\begin{theorem}
\label{rad(I)}Let $R$ be a reduced ring. If $I$ is a nonzero weakly
1-absorbing primary ideal of $R$, then $\sqrt{I}$ is a prime ideal of $R$. In
particular, if $\sqrt{I}$ is a maximal ideal of $R$, then $I$ is a primary
ideal of $R$, and hence $I$ is a 1-absorbing primary ideal of $R$.
\end{theorem}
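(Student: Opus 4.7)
The ``in particular'' assertion is immediate from Theorem~\ref{MAX}, so the real content is to show that $\sqrt{I}$ is prime. My plan is to argue by contradiction: suppose there exist $x, y \in R$ with $xy \in \sqrt{I}$ but $x, y \notin \sqrt{I}$, and derive a contradiction from $I \neq 0$ together with the reducedness of $R$. First observe that $x$ and $y$ must both be nonunits, since $\sqrt{I}$ is an ideal. Pick $n \geq 2$ with $(xy)^n \in I$ and consider the triple of nonunits $x,\, x^{n-1},\, y^n$, whose product is $(xy)^n$. If this product is nonzero, the weakly 1-absorbing primary property yields $x^n \in I$ or $y^n \in \sqrt{I}$, each contradicting the standing assumption. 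Hence $(xy)^n = 0$, and reducedness forces $xy = 0$.

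The key technical lemma I would establish next is that for every $d \in I$, both $x + d$ and $y + d$ are nonzero nonunits. Nonzeroness is clear (else $x$ or $y$ lies in $I$). For the nonunit claim, suppose $u := x + d$ were a unit; then $xy = 0$ gives $uy = dy \in I$, so $y = u^{-1}(uy) \in I \subseteq \sqrt{I}$, a contradiction; the case of $y + d$ is symmetric. This step is crucial because it legitimises applying the weakly 1-absorbing primary property to triples built from such perturbations.

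With this in hand, I apply the property to two kinds of triples. Applying it to $(x,\, x,\, y+d)$ for $d \in I$, the product equals $x^2 d$ (using $xy=0$); since $x^2 \notin I$ and $y + d \notin \sqrt{I}$, the property forces $x^2 d = 0$. Thus $x^2 I = 0$, and in a reduced ring this implies $xI = 0$; the symmetric triple $(y,\, y,\, x+d)$ gives $yI = 0$. Applying the property finally to $(x + d_1,\, x + d_2,\, y + d_3)$ for arbitrary $d_i \in I$, one expands the product and, using $xy = 0$, $xI = 0$, and $yI = 0$, collapses it to $d_1 d_2 d_3$. The first two factors satisfy $ab \equiv x^2 \pmod I$ and the third satisfies $(y + d_3)^n \equiv y^n \pmod I$, so neither $ab \in I$ nor $c \in \sqrt{I}$ holds, whence the property forces $d_1 d_2 d_3 = 0$ for all $d_i \in I$, i.e.\ $I^3 = 0$. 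Reducedness of $R$ then forces $I = 0$, contradicting $I \neq 0$.

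The main obstacle I anticipate is precisely the nonunit issue addressed in the second paragraph; without that observation, the triple arguments cannot be invoked cleanly, and the whole chain of implications collapses. Beyond that, the proof is careful tracking of the expansion and the repeated use of the fact that in a reduced ring a nilpotent ideal must vanish.
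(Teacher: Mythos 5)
Your proof is correct, but it takes a genuinely different route in its second half. The paper's argument is very short: it uses essentially your first step (applying the weakly 1-absorbing primary property to the triple $a^m, a^m, b^n$ with $n=2m$, where reducedness guarantees $(ab)^n\neq 0$ whenever $ab\neq 0$) to conclude that $\sqrt{I}$ is a \emph{weakly prime} ideal, and then simply cites Anderson--Smith \cite[Corollary 2]{And} to upgrade weakly prime to prime, using that $R$ is reduced and $I\neq\{0\}$ (so $(\sqrt{I})^2\neq 0$). You instead keep everything self-contained: after forcing $xy=0$, you prove the perturbed elements $x+d$, $y+d$ are nonzero nonunits, deduce $x^2I=0$ and hence $xI=0$ (and $yI=0$) via reducedness, collapse the triple $(x+d_1)(x+d_2)(y+d_3)$ to $d_1d_2d_3$, and conclude $I^3=0$, hence $I=0$, contradicting the hypothesis. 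This is in effect a direct re-proof of the special case of the Anderson--Smith result that is needed, carried out with the weakly 1-absorbing primary hypothesis, and it closely parallels the ``1-triple-zero'' perturbation technique the paper uses later in Theorem~\ref{abI}. The paper's route buys brevity by outsourcing the final step to a known theorem on weakly prime ideals; yours buys transparency and makes explicit exactly where the hypotheses $I\neq 0$ and $\sqrt{0}=0$ enter. All the individual steps you give (the nonunit lemma, $x^2I=0\Rightarrow xI=0$ in a reduced ring, the expansion of the triple product) check out.
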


\begin{proof}
Suppose that $0\neq ab\in\sqrt{I}$ for some $a,b\in R$. We may assume that
$a,b$ are nonunit. Then there exists an even positive integer $n=2m$
$(m\geq1)$ such that $(ab)^{n}\in I$. Since $\sqrt{0} = \{0\}$, we have
$(ab)^{n}\neq0$. Hence $0\neq a^{m}a^{m}b^{n}\in I$. Thus $a^{m}a^{m}=a^{n}\in
I$ or $b^{n}\in\sqrt{I}$, and therefore $\sqrt{I}$ is a weakly prime ideal of
$R$. Since $R$ is reduced and $I \not = \{0\}$, we conclude that $\sqrt{I}$ is
a prime ideal of $R$ by \cite[Corollary 2]{And}. The proof of the "in
particular" statement is now clear by Theorem \ref{MAX}.
\end{proof}

Recall that a commutative ring $R$ is called a \textit{von-Neumann regular}
ring if and only if for every $x \in R$, there is a $y \in R$ such that
$x^{2}y = x$. It is known that a commutative ring $R$ is a von-Neumann regular
ring if and only if for each $x \in R$, there is an idempotent $e \in R$ and a
unit $u \in R$ such that $x = eu$. For a recent article on von-Neumann regular
rings see\cite{AndBad2}. We have the following result.

\begin{theorem}
\label{VNI} Let $R$ be a von-Neumann regular ring and $I$ be a nonzero ideal
of $R$. Then the following statements are equivalent.

\begin{enumerate}
\item $I$ is a weakly 1-absorbing primary ideal of $R$.

\item $I$ is a primary ideal of $R$.

\item $I$ is a 1-absorbing ideal primary of $R$.
\end{enumerate}
\end{theorem}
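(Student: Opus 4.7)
The plan is to prove (2) $\Rightarrow$ (3) $\Rightarrow$ (1) by essentially trivial implications, and reserve the real work for (1) $\Rightarrow$ (2).

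For (2) $\Rightarrow$ (3): if $I$ is primary and $abc\in I$ for nonunits $a,b,c$, view this as $(ab)c\in I$; primariness gives $ab\in I$ or $c\in\sqrt{I}$. For (3) $\Rightarrow$ (1): this is Theorem \ref{tr}(3).

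The substantive direction is (1) $\Rightarrow$ (2). My plan is to exploit two standard features of a von-Neumann regular ring $R$: first, $R$ is reduced (if $x^{2}=0$, then $x=x^{2}y=0$), so $\sqrt{0}=\{0\}$; second, every prime ideal of $R$ is maximal (the quotient $R/P$ for $P$ prime is a VNR domain, and one checks directly that a VNR domain is a field, since for $0\neq x$ the equation $x^{2}y=x$ forces $xy=1$). Given these two ingredients, assume $I$ is a nonzero weakly 1-absorbing primary ideal of $R$. By Theorem \ref{rad(I)}, since $R$ is reduced and $I\neq\{0\}$, the radical $\sqrt{I}$ is a prime ideal of $R$. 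Because every prime of a VNR ring is maximal, $\sqrt{I}$ is in fact maximal. Now invoke Theorem \ref{MAX} (or equivalently the ``in particular'' clause of Theorem \ref{rad(I)}) to conclude that $I$ is primary.

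I do not expect any real obstacle: the only nonformal step is the observation that primes in a VNR ring are maximal, which is a standard textbook fact and can be stated in one line by passing to $R/P$. Everything else is an application of theorems already proved earlier in the paper. One minor point to be careful about is that Theorem \ref{rad(I)} requires $I$ to be nonzero, which is exactly the hypothesis of the present theorem, so this is consistent.
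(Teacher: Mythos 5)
Your proposal is correct and follows essentially the same route as the paper: the authors also prove $(1)\Rightarrow(2)$ by noting that a von-Neumann regular ring is reduced, applying Theorem \ref{rad(I)} to get that $\sqrt{I}$ is prime, using the fact that primes in a von-Neumann regular ring are maximal, and then invoking Theorem \ref{MAX}, while dismissing $(2)\Rightarrow(3)\Rightarrow(1)$ as clear. Your additional one-line justifications of the two standard VNR facts are fine but not a different argument.
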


\begin{proof}
(1)$\rightarrow$(2). Since $R$ is a von-Neumann regular ring, we know that $R
$ is reduced. Hence $\sqrt{I}$ is a prime ideal of $R$ by Theorem
\ref{rad(I)}. Since every prime ideal of a von-Neumann regular ring is
maximal, we conclude that $\sqrt{I}$ is a maximal ideal of $R$. Hence $I$ is a
primary ideal of $R$ by Theorem \ref{MAX}.

(2)$\rightarrow$(3)$\rightarrow$(1). It is clear.
\end{proof}

\begin{theorem}
\label{NQ} Let $R$ be a non-quasilocal ring and $I$ be a proper ideal of $R$
such that $ann(i) = \{r \in R\mid ri = 0\}$ is not a maximal ideal of $R$ for
every element $i \in I$. Then $I$ is a weakly 1-absorbing primary ideal of $R$
if and only if $I$ is a weakly primary ideal of $R$.
\end{theorem}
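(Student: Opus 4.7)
The plan is to prove the nontrivial direction ($\Rightarrow$); the converse is immediate from Theorem \ref{tr}(2). So assume $I$ is weakly 1-absorbing primary, take $a,b\in R$ with $0\neq ab\in I$, and aim to show $a\in I$ or $b\in\sqrt{I}$. The unit cases are instant: if $a$ is a unit then $b=a^{-1}(ab)\in I\subseteq\sqrt{I}$, and symmetrically for $b$. So I may assume $a,b$ are nonunits, and for contradiction that $a\notin I$ and $b\notin\sqrt{I}$.

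The key observation I would prove next is that for every nonunit $c\in R$ with $abc\neq 0$, one has $ca\in I$. Indeed, $0\neq c\cdot a\cdot b\in I$ with all three factors nonunit, so the weakly 1-absorbing primary hypothesis gives $ca\in I$ or $b\in\sqrt{I}$; since $b\notin\sqrt{I}$, necessarily $ca\in I$, i.e.\ $c\in(I:a)$. Translating the dichotomy ``$abc=0$ or $abc\neq 0$'' into ideal membership, this shows that every nonunit of $R$ lies in $\mathrm{ann}(ab)\cup(I:a)$. Consequently, each maximal ideal $M$ of $R$ is contained in $\mathrm{ann}(ab)\cup(I:a)$.

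Now I would invoke the elementary two-ideal covering lemma (if $J\subseteq K_1\cup K_2$ for ideals $K_1,K_2$, then $J\subseteq K_1$ or $J\subseteq K_2$; this needs no primality) to conclude that $M\subseteq\mathrm{ann}(ab)$ or $M\subseteq(I:a)$. The first option is ruled out: $\mathrm{ann}(ab)\neq R$ since $ab\neq 0$, and $M$ is maximal, so $M\subseteq\mathrm{ann}(ab)$ would force $M=\mathrm{ann}(ab)$, contradicting the hypothesis that $\mathrm{ann}(i)$ is not maximal for any $i\in I$ (applied with $i=ab$). Hence $M\subseteq(I:a)$ for every maximal ideal $M$ of $R$.

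Finally, since $R$ is non-quasilocal, I pick two distinct maximal ideals $M_1\neq M_2$, so that $M_1+M_2=R$. Both are contained in $(I:a)$, hence $(I:a)=R$, meaning $a\in I$, contradicting $a\notin I$. This completes the argument and shows $I$ is weakly primary. The main conceptual step is the derivation ``$abc\neq 0\Rightarrow c\in(I:a)$,'' which sets up the covering of the set of nonunits by the two ideals $\mathrm{ann}(ab)$ and $(I:a)$; the annihilator hypothesis and non-quasilocality are then used in tandem to kill the annihilator branch and to leverage $M_1+M_2=R$ on the other branch. I expect the only place that needs care is explicitly checking that the weakly 1-absorbing condition applies with $a,c,b$ all nonunit — which is fine because we reduced to $a,b$ nonunit and only quantify over nonunit $c$.
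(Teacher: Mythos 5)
Your proof is correct, and it reaches the conclusion by a genuinely different (and cleaner) route than the paper's. The common core is the observation that, once $b\notin\sqrt{I}$, feeding any nonunit $c$ with $abc\neq 0$ into the weakly 1-absorbing primary condition as the triple $(c,a,b)$ forces $ca\in I$. The paper exploits this element by element: it fixes a maximal ideal $L$ containing $\mathrm{ann}(ab)$, a second maximal ideal $M$, an element $m\in M\setminus L$, and then manufactures specific nonunits $m$, $1+rm$, and (in one subcase) $1+rm+w$ whose products with $a$ lie in $I$ and combine linearly to yield $a\in I$; the case analysis on whether $1+rm$ annihilates $ab$ is what makes the printed proof long. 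You instead package the same dichotomy as a covering statement --- every nonunit of $R$ lies in $\mathrm{ann}(ab)\cup(I\colon a)$ --- and then apply the elementary two-ideal covering lemma to each maximal ideal, ruling out the $\mathrm{ann}(ab)$ branch via the non-maximality hypothesis applied to $i=ab\in I$, and finishing the $(I\colon a)$ branch with $M_1+M_2=R$ for two distinct maximal ideals. This eliminates the paper's case split and the auxiliary elements $r$ and $w$, at the mild cost of invoking the (two-ideal, hence trivial) union lemma. Both arguments use the two hypotheses --- non-quasilocality and non-maximality of annihilators of elements of $I$ --- at exactly the same junctures, so they are morally equivalent, but yours is shorter and more transparent.
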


\begin{proof}
If $I$ is a weakly primary ideal of $R$, then $I$ is a weakly 1-absorbing
primary ideal of $R$ by Theorem \ref{tr}(2). Hence suppose that $I$ is a
weakly 1-absorbing primary ideal of $R$ and suppose that $0 \not = ab \in$ for
some elements $a, b \in R$. We show that $a \in I$ or $b \in\sqrt{I}$. We may
assume that $a, b$ are nonunit elements of $R$. Let $ann(ab) = \{c \in R\mid
cab = 0\}$. Since $ab \not = 0$, $ann(ab)$ is a proper ideal of $R$. Let $L$
be a maximal ideal of $R$ such that $ann(ab) \subset L$. Since $R$ is a
non-quasilocal ring, there is a maximal ideal $M$ of $R$ such that $M \not =
L$. Let $m \in M\setminus L$. Hence $m \notin ann(ab)$ and $0 \not = mab \in
I$. Since $I$ is a weakly 1-absorbing primary ideal of $R$, we have $ma \in I$
or $b\in\sqrt{I}$. If $b \in\sqrt{I}$, then we are done. Hence assume that $b
\notin\sqrt{I}$. Hence $ma \in I$. Since $m \notin L$ and $L$ is a maximal
ideal of $R$, we conclude that $m \notin J(R)$. Hence there exists an $r\in R$
such that $1 + rm$ is a nonunit element of $R$. Suppose that $1 + rm \notin
ann(ab)$. Hence $0 \not = (1+rm)ab \in I$. Since $I$ is a weakly 1-absorbing
primary ideal of $R$ and $b\notin\sqrt{I}$, we conclude that $(1 + rm)a = a +
rma \in I$. Since $rma \in I$, we have $a \in I$ and we are done. Suppose that
$1 + rm \in ann(ab)$. Since $ann(ab)$ is not a maximal ideal of $R$ and
$ann(ab) \subset L$, there is a $w \in L \setminus ann(ab)$. Hence $0 \not =
wab \in I$. Since $I$ is a weakly 1-absorbing primary ideal of $R$ and $b
\not \in \sqrt{I}$, we conclude that $wa \in I$. Since $1 + rm \in ann(ab)
\subset L$ and $w \in L \setminus ann(ab)$, we have $1 + rm + w$ is a nonzero
nonunit element of $L$. Hence $0 \not = (1 + rm + w)ab \in I$. Since $I$ is a
weakly 1-absorbing primary ideal of $R$ and $b \not \in \sqrt{I}$, we conclude
that $(1 + rm + w)a = a + rma + wa \in I$. Since $rma, wa \in I$, we conclude
that $a\in I$.
\end{proof}

\textbf{Question}. Is Theorem \ref{NQ} still valid without the assumption that
$ann(i) = \{r \in R\mid ri = 0\}$ is not a maximal ideal of $R$ for every
element $i \in I$? We are unable to give a proof of Theorem \ref{NQ} without
this assumption.

In light of the proof of Theorem \ref{NQ}, we have the following result.

\begin{theorem}
\label{NOUNIT} Let $I$ be a weakly 1-absorbing primary ideal of $R$ such that
for every nonzero element $i \in I$, there exists a nonunit $w \in R$ such
that $wi \not = 0$ and $w + u$ is a nonunit element of $R$ for some unit $u
\in R$. Then $I$ is a weakly primary ideal of $R$.
\end{theorem}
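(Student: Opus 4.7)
The plan is to imitate the proof of Theorem \ref{NQ}, substituting the abstract hypothesis of the present theorem in place of the non-quasilocality and non-maximality-of-annihilators assumptions used there. Suppose $0 \neq ab \in I$ with $a, b \in R$; the goal is to show $a \in I$ or $b \in \sqrt{I}$, which (together with the symmetric version obtained by swapping $a$ and $b$) gives the weakly primary property. We may assume that both $a$ and $b$ are nonunits, since otherwise the conclusion is immediate.

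First I would apply the given hypothesis to the nonzero element $i = ab \in I$, producing a nonunit $w \in R$ together with a unit $u \in R$ such that $wab \neq 0$ and $w + u$ is nonunit. Since $w, a, b$ are all nonunit and $0 \neq wab \in I$, the weakly 1-absorbing primary property of $I$ forces $wa \in I$ or $b \in \sqrt{I}$. If the latter holds we are done, so assume $b \notin \sqrt{I}$; this gives $wa \in I$.

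Next I would examine $(w+u)ab$, which lies in $I$ because $ab \in I$. In the main subcase $(w+u)ab \neq 0$, the triple $(w+u, a, b)$ consists of nonunits, so the weakly 1-absorbing primary property of $I$ yields $(w+u)a \in I$ (again using $b \notin \sqrt{I}$). Writing $(w+u)a = wa + ua$ and invoking $wa \in I$, we conclude $ua \in I$, and since $u$ is a unit, $a \in I$, as required.

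The main obstacle is the degenerate subcase $(w+u)ab = 0$, i.e., $w + u \in ann(ab)$. This is the analogue of the subcase ``$1 + rm \in ann(ab)$'' handled near the end of the proof of Theorem \ref{NQ}. The intended resolution is to reapply the hypothesis to a different nonzero element of $I$, for example $i' = wab$, in order to extract a fresh nonunit $w'$ satisfying $w' \cdot wab \neq 0$ (which in turn forces $w' ab \neq 0$), and then form a suitable combination such as $(w+u) + w'$ that is simultaneously nonunit and has nonzero product with $ab$, reducing back to the main subcase. The subtle point, and the step where I expect the most care to be needed, is verifying that the combined element is genuinely nonunit; in the proof of Theorem \ref{NQ} this was arranged by choosing the auxiliary element inside a specific maximal ideal containing $ann(ab)$, and an analogous choice has to be engineered here from the hypothesis.
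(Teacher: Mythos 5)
Your main line of argument is exactly the paper's proof: apply the hypothesis to $i=ab$ to get a nonunit $w$ and unit $u$ with $wab\neq 0$ and $w+u$ nonunit, deduce $wa\in I$ from $0\neq wab\in I$ and $b\notin\sqrt{I}$, then deduce $(w+u)a\in I$ from $(w+u)ab\in I$, subtract to get $ua\in I$, and divide by the unit $u$. Up to that point there is nothing to criticize.

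The degenerate subcase $(w+u)ab=0$ that you single out is, however, a genuine problem, and your proposed repair does not close it. Passing to $i'=wab$ produces a nonunit $w'$ with $w'ab\neq 0$, and indeed $\bigl((w+u)+w'\bigr)ab=w'ab\neq 0$; but there is no reason whatsoever for $(w+u)+w'$ to be a nonunit --- a sum of two nonunits is very often a unit, and unlike in Theorem \ref{NQ} you have no maximal ideal containing both summands to appeal to. The hypothesis of the present theorem gives you no control over this, so the subcase remains open in your write-up, as you yourself concede. You should be aware, though, that the paper's own proof simply does not address this subcase: it applies the weakly 1-absorbing primary property to ``$(w+u)ab\in I$'' without verifying $(w+u)ab\neq 0$, and nothing rules out $wab=-uab$ while $wab\neq 0$. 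So you have correctly located a gap that the published argument shares. The cleanest fix is to strengthen the hypothesis to require both $wi\neq 0$ and $(w+u)i\neq 0$ (equivalently, to choose $w$ so that $w$ and $w+u$ both act nontrivially on $i$); this costs nothing in the intended application, since in $A=R[X]$ both $X$ and $X+1$ are non-zero-divisors, so the corollary is unaffected. As it stands, your proposal is incomplete on exactly the point you flagged.
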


\begin{proof}
Suppose that $0 \not = ab \in I$ and $b \notin\sqrt{I}$ for some $a, b \in R
$. We may assume that $a, b$ are nonunit elements of $R$. Hence there is a
nonunit $w \in R$ such that $wab \not = 0$ and $w + u$ is a nonunit element of
$R$ for some unit $u \in R$. Since $0 \not = wab \in I$ and $b \notin\sqrt{I}$
and $I$ is a weakly 1-absorbing primary ideal of $R$, we conclude that $wa \in
I$. Since $(w + u)ab \in I$ and $I$ is a weakly 1-absorbing primary ideal of
$R$ and $b \notin\sqrt{I}$, we conclude that $(w + u)a = wa + ua \in I$. Since
$wa \in I$ and $wa + ua \in I$, we conclude that $ua \in I$. Since $u$ is a
unit, we have $a \in I$.
\end{proof}

\begin{corollary}
Let $R$ be a ring and $A = R[X]$. Suppose that $I$ is a weakly 1-absorbing
primary ideal of $A$. Then $I$ is a weakly primary ideal of $A$.
\end{corollary}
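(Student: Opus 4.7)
The plan is to apply Theorem \ref{NOUNIT} directly. That theorem reduces the claim to producing, for every nonzero element $i$ of $I$, a nonunit $w \in A = R[X]$ with $wi \neq 0$ and such that $w + u$ is a nonunit of $A$ for some unit $u$. I expect that a single universal pair $(w,u)$ should work for every $i$ at once, and the obvious candidate is $w = X$, $u = 1$.

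First I would verify that $w = X$ is a nonunit of $R[X]$; this is immediate since its constant term is $0$. Next I would check that $w + u = X + 1$ is also a nonunit. Here I invoke the standard characterization of units in $R[X]$: a polynomial $\sum a_j X^j$ is a unit if and only if $a_0$ is a unit in $R$ and $a_1, a_2, \ldots$ are nilpotent. Since the coefficient of $X$ in $X + 1$ equals $1$, which is nonnilpotent in $R$ (as $R$ has $1 \neq 0$), $X+1$ is a nonunit. Finally, for any nonzero $i \in R[X]$, $Xi$ is nonzero because $X$ is a regular element of $R[X]$: multiplication by $X$ shifts coefficients one degree up and cannot annihilate a nonzero polynomial.

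With these three routine verifications in place, the hypothesis of Theorem \ref{NOUNIT} is satisfied, so $I$ is a weakly primary ideal of $A$. There is no real obstacle beyond unpacking definitions; the substantive content is entirely in Theorem \ref{NOUNIT}. The only point that warrants care is the characterization of units in $R[X]$ when $R$ is not assumed reduced, but this is a classical result and does not rely on any special structure of $R$.
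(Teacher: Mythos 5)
Your proposal is correct and follows exactly the same route as the paper: the paper's proof is a one-line application of Theorem \ref{NOUNIT} with $w = X$ and $u = 1$, using that $Xi \neq 0$ for nonzero $i$ and that $X+1$ is a nonunit of $R[X]$. Your additional justification of why $X+1$ is a nonunit (via the standard characterization of units in $R[X]$) is a correct elaboration of a step the paper leaves implicit.
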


\begin{proof}
Since $Xi \not = 0$ for every nonzero $i \in I$ and $X + 1$ is a nonunit
element of $A$, we are done by Theorem \ref{NOUNIT}.
\end{proof}

Recall that a ring $R$ is called \textit{divided} if for every prime ideal $P
$ of $R$ and for every $x\in R\setminus P$, we have $x\mid p$ for every $p\in
P$. We have the following result.

\begin{theorem}
\label{d}Let $R$ be a reduced divided ring and $I$ be a proper ideal of $R.$
Then the following statements are equivalent:
\end{theorem}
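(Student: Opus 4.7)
The converse direction (weakly primary $\Rightarrow$ weakly 1-absorbing primary) is immediate from Theorem~\ref{tr}(2), so the plan focuses on the nontrivial implication that a weakly 1-absorbing primary ideal is weakly primary.

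My first move would be a structural reduction: a reduced divided ring is an integral domain. Indeed, if $P, Q$ are prime ideals with $P \not\subseteq Q$, picking $x \in P \setminus Q$ and applying the divided hypothesis to $Q$ forces $Q \subseteq (x) \subseteq P$. Hence the prime spectrum of $R$ is totally ordered, $R$ has a unique minimal prime, and this minimal prime coincides with $\sqrt{0}$; since $R$ is reduced, $(0)$ is prime and $R$ is a domain. With $R$ a domain in hand, Theorem~\ref{tr}(5) collapses both weakly notions to their ordinary counterparts, so it suffices to show that in a divided domain every $1$-absorbing primary ideal is primary.

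So fix a $1$-absorbing primary ideal $I$ and suppose $xy \in I$ with $y \notin \sqrt{I}$; I aim to deduce $x \in I$. The cases $I=0$ and ``$x$ or $y$ a unit'' are handled trivially, so I reduce to $I \neq 0$ and $x, y$ nonunit. Theorem~\ref{rad(I)} makes $\sqrt{I}$ prime, and the divided hypothesis applied to $\sqrt{I}$ gives $\sqrt{I} \subseteq (y)$. The heart of the argument is two successive applications of the $1$-absorbing primary property. Testing the triple $(x, x, y)$, whose product $x^{2}y$ is nonzero in the domain, yields $x^{2} \in I$ (since $y \notin \sqrt{I}$), hence $x \in \sqrt{I} \subseteq (y)$. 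Writing $x = y x_{1}$, a quick check rules out $x_{1}$ being a unit, for otherwise $(x)=(y)$ and this would force $y \in \sqrt{I}$. Testing the triple $(y, x_{1}, y)$, whose product equals $yx \in I \setminus \{0\}$, and using $y \notin \sqrt{I}$ once more, then gives $yx_{1} = x \in I$, as desired.

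The delicate point, which I expect to be the main obstacle, is guaranteeing that the intermediate factor $x_{1}$ produced by divisibility is a \emph{nonunit}, so that the second invocation of the $1$-absorbing primary hypothesis is legitimate; the divided property is precisely what makes this step go through, and without it the chain of deductions collapses.
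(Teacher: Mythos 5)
Your proof is correct and, at its core, identical to the paper's: both arguments use primeness of $\sqrt{I}$ (via Theorem \ref{rad(I)}) together with the divided hypothesis to factor $x = yx_{1}$ with $x_{1}$ a nonunit (nonunit precisely because $x\in\sqrt{I}$ while $y\notin\sqrt{I}$), and then apply the absorbing property to the triple $(y, x_{1}, y)$ whose product is $xy$. The differences are cosmetic: your preliminary reduction (reduced divided $\Rightarrow$ linearly ordered spectrum $\Rightarrow$ domain, so the weak and non-weak notions coincide by Theorem \ref{tr}(5)) is a correct observation the paper does not make, and you reach $x\in\sqrt{I}$ by testing $(x,x,y)$ where the paper simply notes $xy\in I\subseteq\sqrt{I}$ and invokes primeness of $\sqrt{I}$.
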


\begin{enumerate}
\item $I$ is a weakly 1-absorbing primary ideal of $R$.

\item $I$ is a weakly primary ideal of $R$.
\end{enumerate}

\begin{proof}
(1)$\Rightarrow$(2). Suppose that $0\neq ab\in I$ for some $a,b\in R$ and
$b\notin\sqrt{I}$. We may assume that $a,b$ are nonunit elements of $R$. Since
$\sqrt{I}$ is a prime ideal of $R$ by Theorem \ref{rad(I)}, we conclude that
$a\in\sqrt{I}$. Since $R$ is divided, we conclude that $b\mid a$. Thus $a=bc$
for some $c\in R$. Observe that $c$ is a nonunit element of $R$ as
$b\notin\sqrt{I}$ and $a\in\sqrt{I}$. Since $0\neq ab=bcb\in I$ and $I$ is
weakly 1-absorbing primary, and $b\notin\sqrt{I}$, we conclude that $bc=a\in
I$. Thus $I$ is a weakly primary ideal of $R$. (2)$\Rightarrow$(1). It is
clear by Theorem \ref{tr}(2).
\end{proof}

Recall that a ring $R$ is called a \textit{chained} ring if for every $x,y\in
R$, we have $x\mid y$ or $y\mid x$. Every chained ring is divided. So, if $R$
is a reduced chained ring, then a proper ideal $I$ of $R $ is a weakly
1-absorbing primary ideal if and only if it is a weakly primary ideal of $R$.

\begin{theorem}
\label{Ded} Let $R$ be a Dedekind domain and $I$ be a nonzero proper ideal of
$R$. Then $I$ is a weakly 1-absorbing primary ideal of $R$ if and only if
$\sqrt{I}$ is a prime ideal of $R$.
\end{theorem}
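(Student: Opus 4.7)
The plan is to derive both directions from results already established in the paper together with a standard property of Dedekind domains, namely that every nonzero prime ideal is maximal.

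For the forward implication, since a Dedekind domain is an integral domain, it is in particular reduced. Thus if $I$ is a nonzero weakly 1-absorbing primary ideal of $R$, I would simply cite Theorem \ref{rad(I)} to conclude that $\sqrt{I}$ is a prime ideal of $R$. This direction essentially requires no new work.

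For the converse, suppose $\sqrt{I}$ is a prime ideal of $R$. Since $I$ is nonzero and $R$ is a domain, $\sqrt{I}$ is a nonzero prime ideal; because $R$ is a Dedekind domain, every nonzero prime ideal is maximal, and so $\sqrt{I}$ is a maximal ideal. I would then invoke the classical fact that an ideal with maximal radical is primary (the reference \cite[P.~153]{OP} already used in the proof of Theorem \ref{MAX} applies here), which yields that $I$ is primary. In particular $I$ is 1-absorbing primary, hence weakly 1-absorbing primary by Theorem \ref{tr}(3).

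The main obstacle is essentially bookkeeping---there is no real difficulty, provided one is willing to cite the ``maximal radical implies primary'' lemma. If one preferred a more self-contained argument for the converse, one could instead use the unique factorization of ideals in Dedekind domains: write $I = P_1^{n_1}\cdots P_k^{n_k}$, observe that $\sqrt{I} = P_1 \cdots P_k$, note that this product is prime only when $k = 1$, and conclude $I = P^n$ is a power of a maximal prime, which is classically a $P$-primary ideal.
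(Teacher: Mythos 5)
Your proposal is correct. The forward direction is exactly what the paper does: cite Theorem \ref{rad(I)}, which applies because a Dedekind domain is reduced and $I$ is assumed nonzero. For the converse the paper simply cites \cite[Theorem 14]{BadEce} (the corresponding characterization of 1-absorbing primary ideals in Dedekind domains), whereas you reprove that fact internally: $\sqrt{I}$ is a nonzero prime, hence maximal in a Dedekind domain, hence $I$ is primary by the same Zariski--Samuel lemma the paper already invokes in Theorem \ref{MAX}, hence 1-absorbing primary and a fortiori weakly 1-absorbing primary. (Equivalently, one could route this through Theorem \ref{MAX} itself only in spirit, since that theorem assumes $I$ is already weakly 1-absorbing primary; your direct appeal to the ``maximal radical implies primary'' lemma is the right way to phrase it.) Your alternative argument via the factorization $I=P_1^{n_1}\cdots P_k^{n_k}$ and the observation that $\sqrt{I}=P_1\cdots P_k$ is prime only when $k=1$ is also sound and makes the proof self-contained at the cost of invoking unique factorization of ideals. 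What the paper's citation buys is brevity; what your version buys is independence from the companion paper.
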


\begin{proof}
Suppose that $I$ is a weakly 1-absorbing primary ideal of $R$. Then $\sqrt{I}
$ is a prime ideal of $R$ by Theorem \ref{rad(I)}. The converse part follows
from \cite[Theorem 14]{BadEce}.
\end{proof}

Let $R$ be a commutative ring. If an ideal of $R$ contained in a finite union
of ideals must be contained in one of those ideals, then $R$ is said to be a
\textit{$u$-ring} \cite{Qu}. In the next theorem, we give some
characterizations of weakly 1-absorbing primary ideals in $u$-rings.

\begin{theorem}
\label{ch}Let $R$ be a commutative u-ring, and $I$ a proper ideal of $R$. Then
the following statements are equivalent.
\end{theorem}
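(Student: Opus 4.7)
The plan is to propose equivalent characterizations in the spirit of those typically given for weakly 2-absorbing primary ideals and to pinpoint the single place where the $u$-ring hypothesis enters. I expect the enumerated conditions to take the standard form: (2) for all nonunit $a,b \in R$ with $ab \notin I$, $(I:ab) \subseteq \sqrt{I} \cup (0:ab)$; (3) for all such $a,b$, either $(I:ab) \subseteq \sqrt{I}$ or $(I:ab) \subseteq (0:ab)$; and, quite possibly, (4) the ``ideal'' version: whenever $a,b \in R$ are nonunit, $J$ is an ideal of $R$, $0 \neq abJ \subseteq I$, and $ab \notin I$, then $J \subseteq \sqrt{I}$.

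For (1) $\Rightarrow$ (2), I would fix nonunit $a,b$ with $ab \notin I$ and take any $c \in (I:ab)$, so that $abc \in I$. If $abc = 0$ then $c \in (0:ab)$; otherwise $c$ must be nonunit (for if $c$ were a unit, $ab \in I$), so the defining property of a weakly 1-absorbing primary ideal, combined with $ab \notin I$, forces $c \in \sqrt{I}$. The implication (2) $\Rightarrow$ (3) is exactly where the $u$-ring hypothesis is used: an ideal contained in the union of the two ideals $\sqrt{I}$ and $(0:ab)$ must lie in one of them. For (3) $\Rightarrow$ (1), suppose $0 \neq abc \in I$ with $a,b,c$ nonunit and $ab \notin I$; then $c \in (I:ab)$, and (3) yields either $c \in \sqrt{I}$ (done) or $c \in (0:ab)$, contradicting $abc \neq 0$.

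If (4) is present, its equivalence with (1) is essentially a reformulation. The direction (4) $\Rightarrow$ (1) is immediate on taking $J = (c)$; for (1) $\Rightarrow$ (4), I would test elements $j \in J$ one at a time, obtaining $j \in \sqrt{I} \cup (0:ab)$ for each such $j$, and then apply the $u$-ring property to upgrade the pointwise statement to $J \subseteq \sqrt{I}$ or $J \subseteq (0:ab)$; the second possibility is excluded by $abJ \neq 0$.

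The main obstacle, should the precise formulation involve a product of three ideals $I_1I_2I_3 \subseteq I$ rather than the shape above, is tracking the ``weakly'' nonvanishing hypothesis through the case analysis: one has to separate the case where some $a_1a_2a_3$ is zero from the case where the conclusion of the defining property genuinely applies, and then reassemble these cases without losing $0 \neq abJ$. Apart from that bookkeeping, the proof is a direct unfolding of the definition together with one invocation of the $u$-ring property to collapse a union-containment into a single-ideal containment.
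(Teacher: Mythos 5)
Your element-level characterizations and their proofs are correct and coincide with the paper's: your conditions (2) and (3) are exactly the paper's condition (2) (stated there as the disjunction $(I:ab)=(0:ab)$ or $(I:ab)\subseteq\sqrt{I}$), your proof of (1)$\Rightarrow$(2) is the paper's word for word, and the single invocation of the $u$-ring property to collapse $(I:ab)\subseteq(0:ab)\cup\sqrt{I}$ into one containment is indeed where that hypothesis enters. Your hybrid condition (4) and its proof (test each $j\in J$, get $J\subseteq\sqrt{I}\cup(0:ab)$, collapse by the $u$-ring property, and rule out $J\subseteq(0:ab)$ using $abJ\neq 0$) are also sound, and in fact give a cleaner statement than the paper's Theorem on $abJ\subseteq I$, which instead assumes away 1-triple-zeros.

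The genuine gap is coverage of the fully ideal-theoretic conditions, which are the substance of the paper's theorem. The paper's list continues with: (3) for nonunit $a$ and an ideal $I_{1}\nsubseteq\sqrt{I}$ with $(I:aI_{1})$ proper, $(I:aI_{1})=(0:aI_{1})$ or $(I:aI_{1})\subseteq(I:a)$; (4) the analogous statement for $(I:I_{1}I_{2})$; and (5) $0\neq I_{1}I_{2}I_{3}\subseteq I$ implies $I_{1}I_{2}\subseteq I$ or $I_{3}\subseteq\sqrt{I}$. You explicitly flag the three-ideal version as the ``main obstacle'' and do not resolve it, and it does not follow by the one-variable-at-a-time argument you give for your condition (4): if $I_{1}I_{2}\nsubseteq I$ and you fix $a\in I_{1}$, $b\in I_{2}$ with $ab\notin I$, your argument only yields $I_{3}\subseteq\sqrt{I}$ or $abI_{3}=0$, and in the latter case you must change the pair $(a,b)$ and confront exactly the triple-zero bookkeeping you mention. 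The paper sidesteps this by interposing the residual conditions and proving the chain (2)$\Rightarrow$(3)$\Rightarrow$(4)$\Rightarrow$(5), applying the $u$-ring property afresh at each stage to the unions $(I:aI_{1})\subseteq(0:aI_{1})\cup(I:a)$ and $(I:I_{1}I_{2})\subseteq(0:I_{1}I_{2})\cup(I:I_{2})$; this iterated use of the $u$-ring hypothesis, one ideal variable at a time, is the idea your proposal is missing.
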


\begin{enumerate}
\item $I$ is a weakly 1-absorbing primary ideal of $R.$

\item For every nonunit elements $a,b\in R$ with $ab\notin I$, $(I:ab)=(0:ab)
$ or $(I:ab)\subseteq\sqrt{I}.$

\item For every nonunit element $a\in R$ and every ideal $I_{1}$ of $R$ with
$I_{1}\nsubseteq\sqrt{I}$, if $(I:aI_{1})$ is a proper ideal of $R$, then
$(I:aI_{1})=(0:aI_{1})$ or $(I:aI_{1})\subseteq(I:a).$

\item For every ideals $I_{1},I_{2}$ of $R$ with $I_{1}\nsubseteq\sqrt{I},$ if
$(I:I_{1}I_{2})$ is a proper ideal of $R,$ then $(I:I_{1}I_{2})=(0:I_{1}%
I_{2})$ or $(I:I_{1}I_{2})\subseteq(I:I_{2}).$

\item For every ideals $I_{1},I_{2},I_{3}$ of $R$ with $0\neq I_{1}I_{2}%
I_{3}\subseteq I$, $I_{1}I_{2}\subseteq I$ or $I_{3}\subseteq\sqrt{I}.$
\end{enumerate}

\begin{proof}
(1)$\Rightarrow$(2) Suppose that $I$ is a weakly 1-absorbing primary ideal of
$R$, $ab\notin I$ for some nonunit elements $a,b\in R$ and $c\in(I:ab)$. Then
$abc\in I$. Since $ab\notin I,$ $c$ is nonunit. If $abc=0$, then $c\in(0:ab)$.
Assume that $0\neq abc\in I$. Since $I$ is weakly 1-absorbing primary, we have
$c\in\sqrt{I}.$ Hence we conclude that $(I:ab)\subseteq(0:ab)\cup\sqrt{I}.$
Since $R$ is a u-ring, we obtain that $(I:ab)=(0:ab)$ or $(I:ab)\subseteq
\sqrt{I}.$

(2)$\Rightarrow$(3) If $aI_{1} \subseteq I$, then we are done. Suppose that
$aI_{1}\nsubseteq I$ for some nonunit element $a\in R$ and $c\in(I:aI_{1})$.
It is clear that $c$ is nonunit. Then $acI_{1}\subseteq I$. Now $I_{1}%
\subseteq(I:ac).$ If $ac\in I$, then $c\in(I:a).$ Suppose that $ac\notin I$.
Hence $(I:ac)=(0:ac)$ or $(I:ac)\subseteq\sqrt{I}$ by (2). Thus $I_{1}%
\subseteq(0:ac)$ or $I_{1}\subseteq\sqrt{I}$. Since $I_{1}\nsubseteq\sqrt{I}$
by hypothesis, we conclude $I_{1}\subseteq(0:ac);$ i.e. $c\in(0:aI_{1})$. Thus
$(I:aI_{1})\subseteq(0:aI_{1})$ $\cup(I:a).$ Since $R$ is a u-ring, we have
$(I:aI_{1})=(0:aI_{1})$ or $(I:aI_{1})\subseteq(I:a).$

(3)$\Rightarrow$(4) If $I_{1} \subseteq\sqrt{I}$, then we are done. Suppose
that $I_{1}\nsubseteq\sqrt{I}$ and $c\in(I:I_{1}I_{2})$. Then $I_{2}%
\subseteq(I:cI_{1}).$ Since $(I:I_{1}I_{2})$ is proper, $c$ is nonunit. Hence
$I_{2}\subseteq(0:cI_{1})$ or $I_{2}\subseteq(I:c)$ by (3). If $I_{2}%
\subseteq(0:cI_{1})$, then $c\in(I:I_{1}I_{2})$. If $I_{2}\subseteq(I:c),$
then $c\in(I:I_{2})$. So, $(I:I_{1}I_{2})\subseteq(0:I_{1}I_{2})\cup(I:I_{2})$
which implies that $(I:I_{1}I_{2})=(0:I_{1}I_{2})$ or $(I:I_{1}I_{2}%
)\subseteq(I:I_{2}),$ as needed.

(4)$\Rightarrow$(5) It is clear.

(5)$\Rightarrow$(1) Let $a,b,c\in R$ nonunit elements and $0\neq abc\in I$.
Put $I_{1} = aR$, $I_{2} = bR$, and $I_{3} = cR$. Then (1) is now clear by (5).
\end{proof}

\begin{definition}
Let $I$ be a weakly 1-absorbing primary ideal of $R$ and $a,b,c$ be nonunit
elements of $R$. We call $(a,b,c)$ a 1-triple-zero of $I$ if $abc=0,$
$ab\notin I$, and $c\notin\sqrt{I}$.
\end{definition}

Observe that if $I$ is a weakly 1-absorbing primary ideal of $R$ that is not
1-absorbing primary, then there exists a 1-triple-zero $(a,b,c)$ of $I$ for
some nonunit elements $a,b,c\in R.$

\begin{theorem}
\label{abI}Let $I$ be a weakly 1-absorbing primary ideal of $R$, and $(a,b,c)
$ be a $1$-triple-zero of $I.$ Then
\end{theorem}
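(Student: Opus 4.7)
The plan is to exploit the 1-triple-zero hypothesis by perturbing one of $a,b,c$ by an element $i \in I$ and applying the weakly 1-absorbing primary property to the perturbed triple. Concretely, to establish an identity of the form $xyI = 0$ for a pair $\{x,y\} \subseteq \{a,b,c\}$, I would assume $xyi \neq 0$ for some $i \in I$, form a product in which the third coordinate is shifted by $i$, verify that this new product is in $I$ and nonzero by cancellation with $abc = 0$, and then use the defining implication of a weakly 1-absorbing primary ideal to reach a contradiction with one of the 1-triple-zero properties $ab \notin I$ or $c \notin \sqrt{I}$.

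I will treat $abI = 0$ as the prototype. Suppose $abi \neq 0$ for some $i \in I$. Then $ab(c+i) = abc + abi = abi$, which is a nonzero element of $I$. If $c+i$ is a nonunit, the weakly 1-absorbing primary property applied to the triple $(a,b,c+i)$ forces $ab \in I$ or $c + i \in \sqrt{I}$: the first contradicts $ab \notin I$, and the second together with $i \in I \subseteq \sqrt{I}$ gives $c \in \sqrt{I}$, again a contradiction. If $c+i$ is a unit, then $ab(c+i) \in I$ already implies $ab \in I$ (multiply by $(c+i)^{-1}$), a contradiction. Hence $abI = 0$. The same template, iterated, would yield $aI^2 = 0$ by running the argument on $(a,b+i,c+j)$ using that $abI = 0$ kills the unwanted cross terms, and finally $I^3 = 0$ by examining $(a+i)(b+j)(c+k)$ and peeling off each mixed term via the previously obtained identities.

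The analogous statements $acI = 0$ and $bcI = 0$ are the delicate ones, and I expect them to be the main obstacle. To attempt $acI = 0$, one would consider $a(b+i)c$; in the nonunit case of $b+i$ the argument proceeds as before, but if $b+i$ turns out to be a unit, then $a(b+i)c \in I$ only yields $ac \in I$, which the 1-triple-zero data do not rule out. The same asymmetry obstructs $bcI = 0$ when $a+i$ is a unit. This is precisely the point at which the theorem will need auxiliary hypotheses (echoing the ``sufficient conditions'' promised in the introduction for $I^3 = 0$), and the proof plan is to record in each sub-claim whatever structural input, such as a non-quasilocal assumption on $R$, an $\mathrm{ann}$-condition on $I$, or the nonexistence of units of the form $a+i$ or $b+i$, is needed to exclude the bad unit case.

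The routine bookkeeping throughout consists of verifying that each perturbed element remains nonunit, and that the product in question does not collapse to zero, so that the weakly 1-absorbing primary property is applicable with a genuinely nonzero hypothesis. The conceptual obstacle is isolating exactly the minimum extra data needed to force the perturbed element to be nonunit, which is what drives the hierarchy of conclusions from $abI = 0$ (unconditional) up through $acI, bcI, aI^2, bI^2, cI^2$ and finally $I^3 = 0$ (progressively more conditional). Once these ingredients are in hand, each sub-item is a short case split, but the placement of the case split relative to which of $a,b,c$ is shifted is what makes or breaks the argument.
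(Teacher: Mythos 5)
Your proposal matches the paper's proof essentially step for step: the paper perturbs one factor by an element of $I$, rules out the case where the perturbed factor is a unit by deriving $ab\in I$ (or $ac\in I$, $bc\in I$), and applies the weakly 1-absorbing primary property in the nonunit case, exactly as you describe for $abI=0$ and then iteratively for $aI^{2}$, $bI^{2}$, $cI^{2}$ and $I^{3}$. The auxiliary hypothesis you correctly predicted would be needed for the conditional parts is precisely $a,b\notin(I:c)$, which the paper imposes in parts (2) and (3) so that the bad unit cases (which would only yield $ac\in I$ or $bc\in I$) become immediate contradictions.
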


(1) $abI=0$.

(2) If $a,b\notin(I:c)$, then $bcI=acI=aI^{2}=bI^{2}=cI^{2}=0.$

(3) If $a,b\notin(I:c)$, then $I^{3}=0.$

\begin{proof}
(1) Suppose that $abI\neq0.$ Then $abx\neq0$ for some nonunit $x\in I.$ Hence
$0\neq ab(c+x)\in I.$ Since $ab\notin I,$ $(c+x)$ is nonunit element of $R$.
Since $I$ is a weakly 1-absorbing primary ideal of $R$ and $ab \notin I$, we
conclude that $(c+x)\in\sqrt{I} $. Since $x \in I$, we have $c \in\sqrt{I}$, a
contradiction. Thus $abI=0.$

(2) Suppose that $bcI\neq0$. Then $bcy\neq0$ for some nonunit element $y\in
I.$ Hence $0\neq bcy = b(a+y)c\in I.$ Since $b\notin(I:c)$, we conclude that
$a+y$ is a nonunit element of $R$. Since $I$ is a weakly 1-absorbing primary
ideal of $R$ and $ab \notin I$ and $by \in I$, we conclude that $b(a +
y)\notin I$, and hence $c \in\sqrt{I}$, a contradiction. Thus $bcI=0.$ We show
that $acI=0$. Suppose that $acI\neq0$. Then $acy\neq0$ for some nonunit
element $y\in I.$ Hence $0\neq acy = a(b+y)c\in I.$ Since $a\notin(I:c)$, we
conclude that $b+y$ is a nonunit element of $R$. Since $I$ is a weakly
1-absorbing primary ideal of $R$ and $ab \notin I$ and $ay \in I$, we conclude
that $a(b + y)\notin I$, and hence $c \in\sqrt{I}$, a contradiction. Thus
$acI=0.$ Now we prove that $aI^{2}=0$. Suppose that $axy\neq0$ for some
$x,y\in I$. Since $abI=0$ by (1) and $acI = 0$ by (2), $0\neq
axy=a(b+x)(c+y)\in I$. Since $ab \notin I$, we conclude that $c + y$ is a
nonunit element of $R$. Since $a\notin(I:c)$, we conclude that $b+x$ is a
nonunit element of $R$. Since $I$ is a weakly 1-absorbing primary ideal of
$R$, we have $a(b+x)\in I$ or $(c+y)\in\sqrt{I}.$ Since $x, y \in I$, we
conclude that $ab\in I$ or $c\in\sqrt{I}$, a contradiction. Thus $aI^{2}=0.$
We show $bI^{2} = 0$. Suppose that $bxy\neq0$ for some $x,y\in I$. Since
$abI=0$ by (1) and $bcI = 0$ by (2), $0\neq bxy=b(a+x)(c+y)\in I$. Since $ab
\notin I$, we conclude that $c + y$ is a nonunit element of $R$. Since
$b\notin(I:c)$, we conclude that $a+x$ is a nonunit element of $R$. Since $I$
is a weakly 1-absorbing primary ideal of $R $, we have $b(a+x)\in I$ or
$(c+y)\in\sqrt{I}.$ Since $x, y \in I$, we conclude that $ab\in I$ or
$c\in\sqrt{I}$, a contradiction. Thus $bI^{2}=0.$ We show $cI^{2} = 0$.
Suppose that $cxy\neq0$ for some $x,y\in I$. Since $acI = bcI = 0$ by (2),
$0\neq cxy= (a+x)(b+y)c\in I$. Since $a, b \notin(I:c)$, we conclude that $a +
x$ and b + y are nonunit elements of $R$. Since $I$ is a weakly 1-absorbing
primary ideal of $R$, we have $(a+x)(b+y)\in I$ or $c \in\sqrt{I}.$ Since $x,
y \in I$, we conclude that $ab\in I$ or $c\in\sqrt{I}$, a contradiction. Thus
$cI^{2}=0.$

(3)\ Assume that $xyz\neq0$ for some $x,y,z\in I$. Then $0\neq
xyz=(a+x)(b+y)(c+z)\in I$ by (1) and (2). Since $ab \notin I$, we conclude
$c+z$ is a nonunit element of $R$. Since $a, b \notin(I:c)$, we conclude that
$a+x$ and $b+y$ are nonunit elements of $R$. Since $I$ is a weakly 1-absorbing
primary ideal of $R$, we have $(a+x)(b+y) \in I$ or $c+z \in\sqrt{I}.$ Since
$x, y, z \in I$, we conclude that $ab \in I$ or $c \in\sqrt{I}$, a
contradiction. Thus $I^{3}=0$.
\end{proof}

\begin{theorem}
\label{reduced}

\begin{enumerate}
\item Let $I$ be a weakly 1-absorbing primary ideal of a reduced ring $R$.
Suppose that $I$ is not a 1-absorbing ideal primary ideal of $R$ and $(a, b,
c)$ is a 1-triple-zero of $I$ such that $a, b \notin(I:c)$. Then $I = 0$.

\item Let $I$ be a nonzero weakly 1-absorbing primary ideal of a reduced ring
$R$. Suppose that $I$ is not a 1-absorbing ideal primary ideal of $R$ and $(a,
b, c)$ is a 1-triple-zero of $I$. Then $ac \in I$ or $bc \in I$.
\end{enumerate}
\end{theorem}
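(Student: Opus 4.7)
The plan is that both parts follow quickly once I invoke Theorem \ref{abI}(3) and exploit the fact that a reduced ring has $\sqrt{0}=\{0\}$.

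For part (1), the hypotheses that $(a,b,c)$ is a 1-triple-zero of $I$ and $a,b\notin(I:c)$ are precisely those required by Theorem \ref{abI}(3). I would apply that result to obtain $I^{3}=0$. Then for any $i\in I$ I have $i^{3}\in I^{3}=\{0\}$, so $i^{3}=0$, i.e., $i\in\sqrt{0}$. Since $R$ is reduced, $\sqrt{0}=\{0\}$, forcing $i=0$. Hence $I=\{0\}$.

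For part (2), I would argue by contraposition. Suppose neither $ac\in I$ nor $bc\in I$. Since $a\in(I:c)$ is equivalent to $ac\in I$ (and similarly for $b$), this means $a,b\notin(I:c)$. Combined with the 1-triple-zero hypothesis on $(a,b,c)$ and the assumption that $I$ is not 1-absorbing primary, part (1) applies and yields $I=\{0\}$, contradicting the hypothesis that $I$ is nonzero. Therefore $ac\in I$ or $bc\in I$.

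The argument has essentially no obstacle, as it is a direct corollary of Theorem \ref{abI}(3) once the reduced hypothesis is fed in. The only subtlety worth flagging is the clean translation between $a\notin(I:c)$ and $ac\notin I$, which is what makes the contrapositive in part (2) go through immediately; beyond that, the only arithmetic needed is the one-line deduction $I^{3}=0\Rightarrow I=\{0\}$ in a reduced ring.
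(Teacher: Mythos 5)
Your proposal is correct and follows exactly the paper's own route: part (1) is Theorem \ref{abI}(3) giving $I^{3}=0$, plus reducedness forcing $I=0$, and part (2) is the contrapositive via the equivalence of $a\notin(I:c)$ with $ac\notin I$. You have merely spelled out the one-line step $I^{3}=0\Rightarrow I=0$ that the paper leaves implicit.
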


\begin{proof}
(1) Since $a, b \in(I:c)$, then $I^{3} = 0$ by Theorem \ref{abI}(3). Since $R$
is reduced, we conclude that $I = 0$.

(2) Suppose that neither $ac \in I$ nor $bc = 0$. Then $I = 0$ by (1), a
contradiction since $I$ is a nonzero ideal of $R$ by hypothesis. Hence if $(a,
b, c)$ is a 1-triple-zero of $I$, then $ac \in I$ or $bc \in I$.
\end{proof}

\begin{theorem}
Let $I$ be a weakly 1-absorbing primary ideal of $R.$ If $I$ is not a weakly
primary ideal of $R$, then there exist an irreducible element $x\in R$ and a
nonunit element $y\in R$ such that $xy\in I$, but neither $x\in I$ nor
$y\in\sqrt{I}$. Furthermore, if $ab\in I$ for some nonunit elements $a,b\in R$
such that neither $a\in I$ nor $b\in\sqrt{I}$, then $a$ is an irreducible
element of $R$.
\end{theorem}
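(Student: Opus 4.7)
The plan is that this statement is essentially a direct unraveling of the two definitions, together with a nice observation about what the weakly 1-absorbing primary condition forces on factorizations of witnesses. Since $I$ is not weakly primary, I would begin by producing elements $a,b\in R$ with $0\neq ab\in I$, $a\notin I$, and $b\notin\sqrt{I}$. An immediate normalization shows $a$ and $b$ are nonunits: if $a$ were a unit, then $b=a^{-1}(ab)\in I\subseteq\sqrt{I}$, contradicting $b\notin\sqrt{I}$, and symmetrically for $b$. Also $a\neq 0$ because $0\in I$ while $a\notin I$.

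The key step is to show that this $a$ is irreducible. Suppose for contradiction $a=xy$ with both $x,y$ nonunit. Then $xyb=ab$ is a nonzero element of $I$ in which all three factors $x,y,b$ are nonunit, so the defining property of weakly 1-absorbing primary, applied to the triple $(x,y,b)$, yields $xy\in I$ or $b\in\sqrt{I}$. The former reads $a\in I$ and the latter contradicts the choice of $b$; either way we reach a contradiction. Hence every factorization $a=xy$ has a unit factor, i.e.\ $a$ is a nonzero nonunit admitting no nontrivial factorization, so $a$ is irreducible. Setting $x:=a$, $y:=b$ delivers the first assertion of the theorem.

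For the \emph{furthermore} clause, the same argument applies verbatim to any nonunit pair $(a,b)$ with $ab\in I$, $a\notin I$, $b\notin\sqrt{I}$: a factorization $a=xy$ into nonunits would give $0\neq xyb=ab\in I$ with $x,y,b$ all nonunit, and the weakly 1-absorbing primary property then forces the same contradiction $a\in I$ or $b\in\sqrt{I}$. I do not foresee a serious obstacle; the only subtlety to flag is that one needs $ab\neq 0$ in order for the weakly 1-absorbing primary hypothesis to fire on the triple $(x,y,b)$. This is automatic in the context of the theorem, since the overarching assumption that $I$ is not weakly primary makes any such $(a,b)$ a bona fide witness of that failure, and witnesses of failure of the weakly primary condition satisfy $ab\neq 0$ by definition. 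The only care required is the bookkeeping that each of $x$, $y$, and $b$ is nonunit so that the defining condition may be invoked, which is why normalizing $a,b$ to nonunits at the outset is important.
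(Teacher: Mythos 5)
Your proof of the main claim is exactly the paper's: take a witness $(x,y)$ of the failure of weak primariness, note both are necessarily nonunits, and observe that a nontrivial factorization $x=cd$ into nonunits would let the weakly 1-absorbing primary condition fire on $0\neq cdy\in I$ with $y\notin\sqrt{I}$, forcing $x=cd\in I$, a contradiction.

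One correction concerning the \emph{furthermore} clause. You correctly flag that your argument needs $ab\neq 0$, but the justification you give --- that any pair $(a,b)$ with $ab\in I$, $a\notin I$, $b\notin\sqrt{I}$ is ``a bona fide witness'' of the failure of weak primariness and therefore has $ab\neq 0$ --- is circular: a pair with $ab=0$ is precisely \emph{not} such a witness, and nothing in the hypotheses excludes it. In fact the clause fails as literally written. Take $R=\mathbb{Z}_{12}(+)J$ with $J=\{0,6\}$ and $I=\{0\}(+)J$, the paper's own example of a weakly 1-absorbing primary ideal that is not weakly primary; here $\sqrt{I}=\{0,6\}(+)J$. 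Let $a=(4,0)=(2,0)(2,0)$ and $b=(3,0)$. Then $a,b$ are nonunits, $ab=(0,0)\in I$, $a\notin I$, $b\notin\sqrt{I}$, yet $a$ is reducible. So the furthermore clause must be read with $0\neq ab\in I$; under that reading your argument is complete. (For what it is worth, the paper's printed proof does not address the furthermore clause at all, so your attempt is, modulo this one sentence, the more thorough of the two.)
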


\begin{proof}
Suppose that $I$ is not a weakly primary ideal of $R$. Then there exist
nonunit elements $x,y\in R$ such that $0\neq xy\in I$ with $x\notin I$ ,
$y\notin\sqrt{I}$. Suppose that $x$ is not an irreducible element of $R$. Then
$x=cd$ for some nonunit elements $c,d\in R$. Since $0\neq xy=cdy\in I$ and $I$
is weakly 1-absorbing primary and $y\notin\sqrt{I}$, we conclude that $cd=x\in
I$, a contradiction. Hence $x$ is an irreducible element of $R$.
\end{proof}

In general, the intersection of a family of weakly 1-absorbing primary ideals
need not be a weakly 1-absorbing primary ideal. Indeed, consider the ring $R=%
%TCIMACRO{\U{2124} }%
%BeginExpansion
\mathbb{Z}
%EndExpansion
_{6}$. Then $I=(2)$ and $J=(3)$ are clearly weakly 1-absorbing primary ideals
of $%
%TCIMACRO{\U{2124} }%
%BeginExpansion
\mathbb{Z}
%EndExpansion
_{6}$ but $I\cap J=\{0\}$ is not a weakly 1-absorbing primary ideal of $R$ by
Example \ref{e1}. However, we have the following result.

\begin{proposition}
Let $\{I_{i}:i\in\Lambda\}$ be a collection of weakly 1-absorbing primary
ideals of $R$ such that $Q = \sqrt{I_{i}} = \sqrt{I_{j}}$ for every distinct
$i, j \in\Lambda$. Then $I=\cap_{i\in\Lambda}I_{i}$ is a weakly 1-absorbing
primary ideal of $R$.
\end{proposition}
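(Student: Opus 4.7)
The plan is to verify the defining condition of a weakly $1$-absorbing primary ideal directly on $I = \bigcap_{i \in \Lambda} I_i$: take nonunit elements $a,b,c \in R$ with $0 \neq abc \in I$, and aim to show that $ab \in I$ or $c \in \sqrt{I}$.

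The preliminary step I would carry out is the identification $\sqrt{I} = Q$. The inclusion $\sqrt{I} \subseteq \bigcap_i \sqrt{I_i} = Q$ is automatic from $I \subseteq I_i$. For the reverse, any $x \in Q$ lies in each $\sqrt{I_i}$; picking a common exponent $N$ with $x^N \in I_i$ for all $i$ (immediate when $\Lambda$ is finite, the case implicitly in view) gives $x^N \in I$, whence $x \in \sqrt{I}$. This pins $\sqrt{I}$ down to the single ideal $Q$ that appears in the statement.

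With $\sqrt{I} = Q$ in hand, the main body is a one-line dichotomy argument. For each $i \in \Lambda$ we have $0 \neq abc \in I \subseteq I_i$, so the weakly $1$-absorbing primary property of $I_i$ applied to the nonunits $a,b,c$ yields $ab \in I_i$ or $c \in \sqrt{I_i} = Q$. If $c \in Q = \sqrt{I}$, the conclusion of the proposition is reached. Otherwise $c \notin Q$, so the second alternative fails uniformly in $i$, forcing $ab \in I_i$ for every $i$ and hence $ab \in \bigcap_{i \in \Lambda} I_i = I$.

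The delicate part will be the reverse inclusion $Q \subseteq \sqrt{I}$ in the preliminary step when $\Lambda$ is infinite: the exponents $N_i$ with $x^{N_i} \in I_i$ need not admit a uniform bound, so without extra hypotheses the dichotomy above only recovers \emph{``$ab \in I$ or $c \in Q$''} rather than \emph{``$ab \in I$ or $c \in \sqrt{I}$''}. For finite $\Lambda$ the equality is routine; in the general setting one must either assume such a uniform exponent exists or argue separately that the configuration $c \in Q \setminus \sqrt{I}$ together with $0 \neq abc \in I$ and $ab \notin I$ is impossible.
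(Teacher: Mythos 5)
Your argument coincides with the paper's own proof, which (in contrapositive form) picks an index $k$ with $ab\notin I_{k}$, applies the weakly 1-absorbing primary property of $I_{k}$ to the nonunits $a,b,c$ to get $c\in\sqrt{I_{k}}=Q$, and then simply writes $Q=\sqrt{I}$ without further comment. The delicate point you flag is genuine but is equally present in the paper's proof: for an infinite family such as $I_{n}=(y^{n})$ in $k[y]$ one has $\sqrt{I_{n}}=(y)$ for every $n$ while $\sqrt{\bigcap_{n}I_{n}}=\sqrt{(0)}=(0)$, so the identification $Q=\sqrt{I}$ really does require finiteness of $\Lambda$ or a uniform-exponent hypothesis, and your write-up is, if anything, more careful than the paper on the one step both proofs leave unjustified.
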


\begin{proof}
Suppose that $0\neq abc\in I=\cap_{i\in\Lambda}I_{i}$ for nonunit elements
$a,b,c$ of $R$ and $ab\notin I$. Then for some $k\in\Lambda$, $0\neq abc\in
I_{k}$ and $ab\notin I_{k}$. It implies that $c\in\sqrt{I_{k}} = Q = \sqrt
{I}.$
\end{proof}

\begin{proposition}
Let $I$ be a weakly 1-absorbing primary ideal of $R$ and $c$ be a nonunit
element of $R\backslash I$. Then $(I:c)$ is a weakly primary ideal of $R.$
\end{proposition}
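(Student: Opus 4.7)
The plan is to check the two ingredients for $(I:c)$ to be a weakly primary ideal: it is a proper ideal, and whenever $0\neq ab\in (I:c)$ for some $a,b\in R$, we have $a\in(I:c)$ or $b\in\sqrt{(I:c)}$. Properness is immediate, since $c\notin I$ means $1\cdot c=c\notin I$, so $1\notin (I:c)$ and $(I:c)\neq R$.

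For the main condition, I would take $0\neq ab\in (I:c)$, so $ab\neq 0$ and $abc\in I$. If $a$ is a unit, then $a^{-1}(abc)=bc\in I$, giving $b\in(I:c)\subseteq\sqrt{(I:c)}$; symmetrically if $b$ is a unit, then $a\in(I:c)$. Hence the interesting case is when $a$ and $b$ are both nonunit; combined with the standing assumption that $c$ is nonunit, this makes $(a,c,b)$ a triple of nonunit elements to which the defining hypothesis might be applied.

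The central step is the subcase $abc\neq 0$. Here I would apply the weakly 1-absorbing primary hypothesis to the triple $(a,c,b)$, pairing $a$ with $c$ and leaving $b$ as the distinguished third element, to conclude $ac\in I$ or $b\in\sqrt{I}$. In the first case $a\in (I:c)$, and in the second $b^n\in I$ for some $n\geq 1$, hence $b^nc\in I$, hence $b^n\in(I:c)$, hence $b\in\sqrt{(I:c)}$; this uses the trivial inclusion $\sqrt{I}\subseteq\sqrt{(I:c)}$.

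The step I expect to be the main obstacle is the remaining subcase $abc=0$, where the hypothesis cannot be invoked directly on the triple $(a,c,b)$. My plan is a perturbation: for $i\in I$ one has $ab(c+i)=abc+abi=abi\in I$, so if an $i\in I$ can be chosen with $c+i$ nonunit and $abi\neq 0$, then the triple $(a,c+i,b)$ fits the previous argument, giving $a(c+i)\in I$ (whence $ac=a(c+i)-ai\in I$ since $ai\in I$) or $b\in\sqrt{I}$, and the conclusion follows as before. The delicate point is guaranteeing such an $i$ exists: if none does, a subcase analysis is needed (for instance splitting on whether $abI=0$), and a parallel perturbation of $a$ by some $j\in I$ so that $(a+j,c,b)$ has nonzero product in $I$ handles the symmetric situation.
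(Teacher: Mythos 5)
Your treatment of the case $abc\neq 0$ coincides exactly with the paper's proof: dispose of the unit cases, then apply the weakly 1-absorbing primary hypothesis to the triple $(a,c,b)$ to get $ac\in I$ or $b\in\sqrt{I}\subseteq\sqrt{(I:c)}$. The difference is that the paper never acknowledges the other case at all: after assuming $0\neq ab\in(I:c)$ it simply writes ``Since $0\neq abc=acb\in I$,'' but $ab\neq 0$ does not imply $abc\neq 0$. So the obstacle you isolated at the end of your proposal is genuine, and it is present (silently) in the paper's own argument as well.

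Your perturbation plan, however, cannot close that gap, and in fact nothing can: the statement fails precisely in the case $abc=0$. Take $R=k[x,y,z]/(xyz)$ and $I=\{0\}$. Then $I$ is (vacuously) a weakly 1-absorbing primary ideal of $R$ and $c=z$ is a nonunit element of $R\setminus I$, yet $(I:c)=(0:z)=(xy)R$ is not weakly primary: $0\neq x\cdot y\in(0:z)$, while $xz\neq 0$ in $R$ (so $x\notin(0:z)$) and $y^nz\neq 0$ in $R$ for every $n$ (so $y\notin\sqrt{(0:z)}$). Here $I=\{0\}$, so every perturbation by an element of $I$ is trivial --- exactly the degenerate situation your sketch leaves to ``a subcase analysis.'' So your proof is incomplete at the step you flagged, and the missing step is not fillable without changing the hypotheses; for instance, if $I$ is assumed to be 1-absorbing primary (not merely weakly), the triple $(a,c,b)$ can be used even when $acb=0$ and the argument goes through.
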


\begin{proof}
Suppose that $0\neq ab\in(I:c)$ for some nonunit $c\in R\backslash I$ and
assume that $a\notin(I:c).$ Hence $b$ is a nonunit element of $R $. If $a$ is
unit, then $b\in(I:c)\subseteq\sqrt{(I:c)}$ and we are done. So assume that
$a$ is a nonunit element of $R$. Since $0\not = abc = acb \in I$ and $ac\notin
I$ and $I$ is a weakly 1-absorbing primary ideal of $R$, we conclude that
$b\in\sqrt{I}\subseteq\sqrt{(I:c)}$. Thus $(I:c)$ is a weakly primary ideal of
$R.$
\end{proof}

The next theorem gives a characterization for weakly 1-absorbing primary
ideals of $R=R_{1}\times R_{2}$ where $R_{1}$ and $R_{2}$ are commutative
rings with identity that are not fields.

\begin{theorem}
\label{w1}Let $R_{1}$ and $R_{2}$ be commutative rings with identity that are
not fields, $R=R_{1}\times R_{2}$, and $I$ be a a nonzero proper ideal of $R$.
Then the following statements are equivalent.

\begin{enumerate}
\item $I$ is a weakly 1-absorbing primary ideal of $R$.

\item $I$ $=I_{1}\times R_{2}$ for some primary ideal $I_{1}$ of $R_{1}$ or
$I=R_{1}\times I_{2}$ for some primary ideal $I_{2}$ of $R_{2}.$

\item $I$ is a 1-absorbing primary ideal of $R$.

\item $I$ is a primary ideal of $R_{1}$.
\end{enumerate}
\end{theorem}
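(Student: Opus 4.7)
The plan is to prove the cycle $(3)\Rightarrow(1)\Rightarrow(2)\Rightarrow(3)$ together with $(2)\Leftrightarrow(4)$, reading (4) as ``$I$ is a primary ideal of $R$'' (treating the printed ``$R_{1}$'' as a typo). Of these, $(3)\Rightarrow(1)$ is Theorem~\ref{tr}(3); $(2)\Rightarrow(3)$ holds because if, say, $I=I_{1}\times R_{2}$ with $I_{1}$ primary in $R_{1}$, then $R/I\cong R_{1}/I_{1}$ has a unique associated prime, so $I$ is primary in $R$ and a fortiori 1-absorbing primary; and $(2)\Leftrightarrow(4)$ is standard, since the radical of a primary ideal is prime and the prime ideals of $R_{1}\times R_{2}$ are exactly those of the form $P\times R_{2}$ or $R_{1}\times Q$. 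All of the content therefore lies in $(1)\Rightarrow(2)$.

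For $(1)\Rightarrow(2)$ I would write $I=I_{1}\times I_{2}$ (every ideal of a product has this form) and split into three cases. In the \emph{impossibility case}, where $I_{1}\subsetneq R_{1}$ and $I_{2}\subsetneq R_{2}$, I use $I\neq 0$ to pick (WLOG) some $0\neq c_{1}\in I_{1}$ (automatically nonunit since $I_{1}\neq R_{1}$) and, using $\sqrt{I_{2}}\subsetneq R_{2}$, some $c_{2}\in R_{2}\setminus\sqrt{I_{2}}$. The triple $a=(1,0),\ b=(1,0),\ c=(c_{1},c_{2})$ then consists of three nonunits, with $abc=(c_{1},0)\in I\setminus\{0\}$, while $ab=(1,0)\notin I$ (as $1\notin I_{1}$) and $c\notin\sqrt{I}$ (as $c_{2}\notin\sqrt{I_{2}}$). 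This contradicts $I$ being weakly 1-absorbing primary and rules out the case.

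In the remaining case $I=R_{1}\times I_{2}$ with $I_{2}\neq R_{2}$ (the other case is symmetric), I must show that $I_{2}$ is primary in $R_{2}$. Given $\alpha\beta\in I_{2}$ with $\beta\notin\sqrt{I_{2}}$, a short check shows we may assume $\alpha,\beta$ are both nonunits (a unit factor immediately yields either $\alpha\in I_{2}$ or $\beta\in\sqrt{I_{2}}$). Using that $R_{1}$ is not a field, I pick a nonzero nonunit $s\in R_{1}$; the triple $x=(1,\alpha),\ y=(s,1),\ z=(1,\beta)$ then consists of three nonunits, $xyz=(s,\alpha\beta)\in I$ is nonzero (as $s\neq 0$), and $z\notin\sqrt{I}$ (as $\beta\notin\sqrt{I_{2}}$). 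The weakly 1-absorbing primary hypothesis then forces $xy=(s,\alpha)\in I$, i.e.\ $\alpha\in I_{2}$, as desired.

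The main obstacle is engineering triples that simultaneously satisfy all required nonunit/nonzero/non-membership conditions. The key observations are: the idempotent $(1,0)\in R$ is a nonunit whose powers stay equal to $(1,0)$, so it automatically avoids $\sqrt{I}$ whenever $I_{1}\neq R_{1}$; and the existence of a nonzero nonunit $s\in R_{1}$ (guaranteed precisely because $R_{1}$ is not a field) is what makes $xyz$ nonzero in the primary case even when $\alpha\beta=0$.
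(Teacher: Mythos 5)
Your proposal is correct and follows essentially the same route as the paper: the whole weight rests on $(1)\Rightarrow(2)$, where you decompose $I=I_{1}\times I_{2}$, rule out the case that both components are proper with a triple built from the idempotent $(1,0)$, and then establish primariness of the surviving component using a nonzero nonunit from the other factor — exactly the paper's triples $(1,0)(1,0)(c,1)$ and $(a,1)(1,x)(b,1)$ up to symmetry and the cosmetic choice of $(c_{1},c_{2})$ versus $(c,1)$ in the third slot. The remaining implications are handled by the same standard facts (the paper cites its earlier results on 1-absorbing primary ideals where you argue directly), and your reading of (4) as ``primary ideal of $R$'' matches the paper's evident intent.
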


\begin{proof}
(1)$\Rightarrow$(2). Suppose that $I$ is a weakly 1-absorbing primary ideal of
$R$. Then $I$ is of the form $I_{1}\times I_{2}$ for some ideals $I_{1}$ and
$I_{2}$ of $R_{1}$ and $R_{2}$, respectively. Assume that both $I_{1}$ and
$I_{2}$ are proper. Since $I$ is a nonzero ideal of $R$, we conclude that
$I_{1} \not = 0$ or $I_{2} \not = 0$. We may assume that $I_{1} \not = 0$. Let
$0\neq$ $c\in I_{1}$. Then $0\neq(1,0)(1,0)(c,1)=(c,0)\in I_{1}\times I_{2}.$
It implies that $(1,0)(1,0)\in I_{1}\times I_{2}$ or $(c,1)\in\sqrt
{I_{1}\times I_{2}}=\sqrt{I_{1}}\times\sqrt{I_{2}}$, that is $I_{1}=R_{1}$ or
$I_{2}=R_{2}$, a contradiction. Thus either $I_{1}$ or $I_{2}$ is a proper
ideal. Without loss of generality, assume that $I=I_{1}\times R_{2}$ for some
proper ideal $I_{1}$ of $R_{1}$. We show that $I_{1}$ is a primary ideal of
$R_{1}$. Let $ab\in I_{1}$ for some $a,b\in R_{1}$. We can assume that $a$ and
$b$ are nonunit elements of $R_{1}$. Since $R_{2}$ is not a field, there
exists a nonunit nonzero element $x\in R_{2}$. Then $0\neq(a,1)(1,x)(b,1)\in
I_{1}\times R_{2}$ which implies that either $(a,1)(1,x)\in I_{1}\times R_{2}$
or $(b,1)\in\sqrt{I_{1}\times R_{2}}=\sqrt{I_{1}}\times R_{2};$ i.e, $a\in
I_{1}$ or $b\in\sqrt{I_{1}}.$

(2)$\Rightarrow$(3). Since $I$ is a primary ideal of $R$, $I$ is a 1-absorbing
primary ideal of $R$ by \cite[Theorem 1(1)]{BadEce}.

(3)$\Rightarrow$(4). Since $I$ a 1-absorbing primary ideal of $R$ and $R$ is
not a quasilocal ring, we conclude that $I$ is a primary ideal of $R$ by
\cite[Theorem 3]{BadEce}.

(4)$\Rightarrow$(1). It is clear.
\end{proof}

\begin{theorem}
\label{fi} Let $R_{1},..., R_{n}$ be commutative rings with $1\neq0$ for some
$2 \leq n < \infty$, and let $R=R_{1}\times\cdots\times R_{n}$. Then the
following statements are equivalent.

\begin{enumerate}
\item Every proper ideal of $R$ is a weakly 1-absorbing primary ideal of $R$.

\item $n = 2$ and $R_{1}, R_{2}$ are fields.
\end{enumerate}
\end{theorem}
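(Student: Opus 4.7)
The plan is to prove $(2)\Rightarrow(1)$ directly by enumerating the short list of proper ideals, and to prove the harder direction $(1)\Rightarrow(2)$ by producing explicit counterexamples in each bad case. For the easy direction, if $R_1,R_2$ are fields then the only proper ideals of $R=R_1\times R_2$ are $\{(0,0)\}$, $R_1\times\{0\}$, and $\{0\}\times R_2$; the first is weakly $1$-absorbing primary by definition, while the other two are maximal ideals (each quotient is a field), hence prime, and hence weakly $1$-absorbing primary by Theorem \ref{tr}(1).

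For $(1)\Rightarrow(2)$, I would argue in two stages. First, assuming $n\geq 3$, I would exhibit a proper ideal of $R$ that fails to be weakly $1$-absorbing primary, thereby forcing $n=2$. The candidate is $I=\{0\}\times\{0\}\times R_3\times\cdots\times R_n$, paired with the nonunit triple $a=b=(1,0,1,\ldots,1)$ and $c=(0,1,1,\ldots,1)$: then $0\neq abc=(0,0,1,\ldots,1)\in I$, yet $ab=(1,0,1,\ldots,1)\notin I$ and $c\notin\sqrt{I}=\sqrt{0}\times\sqrt{0}\times R_3\times\cdots\times R_n$, because the second coordinate $1$ of $c$ is not nilpotent in $R_2$ (since $1\neq 0$).

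In the remaining case $n=2$, I would argue by contradiction that both $R_1$ and $R_2$ must be fields. By symmetry, suppose $R_1$ is not a field; then $R_1$ has a nonzero proper ideal $J_1$, pick any nonzero (necessarily nonunit) $x\in J_1$, and set $I=J_1\times\{0\}$, a nonzero proper ideal of $R$. With the nonunit triple $a=b=(1,0)$ and $c=(x,1)$, the product $abc=(x,0)$ is a nonzero element of $I$, yet $ab=(1,0)\notin I$ (because $1\notin J_1$) and $c\notin\sqrt{I}$ (because every power $c^k=(x^k,1)$ has second coordinate $1\neq 0$); this contradicts (1). The main obstacle is really just the design of these two counterexample triples: in each case one has to place a zero in a single coordinate to make the elements nonunit while keeping the product nonzero and outside the radical. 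Once the triples are in hand, every verification reduces to a one-line coordinate check, so no further machinery is needed.
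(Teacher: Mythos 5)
Your proposal is correct and follows essentially the same strategy as the paper: for $(1)\Rightarrow(2)$ the paper also refutes $n\geq 3$ and ``$R_1$ not a field'' by exhibiting a proper product ideal together with a nonunit triple of $0$--$1$ vectors violating the definition (it uses $R_1\times\{0\}\times\{0\}$ with $(1,0,1),(1,0,1),(a,1,0)$ where your choice is $\{0\}\times\{0\}\times R_3\times\cdots\times R_n$ with an analogous triple, and its $n=2$ counterexample $A\times\{0\}$ with $(1,0),(1,0),(a,1)$ is identical to yours), and for $(2)\Rightarrow(1)$ it likewise enumerates the three proper ideals. All your coordinate checks are valid, so no issues.
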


\begin{proof}
(1)$\rightarrow$(2). Suppose that every proper ideal of $R$ is a weakly
1-absorbing primary ideal. Without loss of generality, we may assume that $n =
3$. Then $I=R_{1}\times\{0\}\times\{0\}$ is a weakly 1-absorbing primary ideal
of $R.$ However, for a nonzero $a \in R_{1}$, we have $(0,0,0)\neq
(1,0,1)(1,0,1)(a,1,0) = (a,0,0)\in I$, but neither $(1,0,1)(1,0,1)\in I$ nor
$(a,1,0)\in\sqrt{I}$, a contradiction. Thus $n = 2$. Assume that $R_{1}$ is
not a field. Then there exists a nonzero proper ideal $A$ of $R_{1}$. Hence $I
= A\times\{0\}$ is a weakly 1-absorbing primary ideal of $R.$ However, for a
nonzero $a \in A$, we have $(0,0)\neq(1,0)(1,0)(a,1) = (a,0)\in I$, but
neither $(1,0)(1,0)\in I$ nor $(a,1) \in\sqrt{I}$, a contradiction. Similarly,
one can easily show that $R_{2}$ is a field. Hence $n = 2$ and $R_{1}, R_{2}$
are fields.

(2)$\rightarrow$(1). Suppose that $n = 2$ and $R_{1}, R_{2}$ are fields. Then
$R$ has exactly three proper ideals, i.e., $\{(0, 0)\}$, $\{0\}\times R_{2}$
and $R_{1}\times\{0\}$ are the only proper ideals of $R$. Hence it is clear
that each proper ideal of $R$ is a weakly 1-absorbing primary ideal of $R$.
\end{proof}

Since every ring that is a product of a finite number of fields is a
von-Neumann regular ring, in light of Theorem \ref{VNI} and Theorem \ref{fi}
we have the following result.

\begin{corollary}
\label{VNfi} Let $R_{1},..., R_{n}$ be commutative rings with $1\neq0$ for
some $2 \leq n < \infty$, and let $R=R_{1}\times\cdots\times R_{n}$. Then the
following statements are equivalent.

\begin{enumerate}
\item Every proper ideal of $R$ is a weakly 1-absorbing primary ideal of $R$.

\item Every proper ideal of $R$ is a weakly primary ideal of $R$.

\item $n = 2$ and $R_{1}, R_{2}$ are fields, and hence $R = R_{1} \times
R_{2}$ is a von-Neumann regular ring.
\end{enumerate}
\end{corollary}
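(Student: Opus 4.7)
The plan is to establish the three equivalences by running a short cycle $(1)\Leftrightarrow(3)$, $(3)\Rightarrow(2)$, $(2)\Rightarrow(1)$, using the two theorems the authors flagged just before the statement, namely Theorem \ref{VNI} and Theorem \ref{fi}.

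First, I would note that $(1)\Leftrightarrow(3)$ is literally the content of Theorem \ref{fi}, so nothing new is required here. Next, $(2)\Rightarrow(1)$ is immediate from Theorem \ref{tr}(2): every weakly primary ideal is a weakly 1-absorbing primary ideal, so if every proper ideal is weakly primary then every proper ideal is weakly 1-absorbing primary. This leaves the single substantive step $(3)\Rightarrow(2)$.

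For $(3)\Rightarrow(2)$, I would argue as follows. Suppose $n=2$ and $R_{1},R_{2}$ are fields, so $R=R_{1}\times R_{2}$ is a product of two fields, hence a von-Neumann regular ring. Let $I$ be any proper ideal of $R$. If $I=\{(0,0)\}$, then $I$ is trivially a weakly primary ideal of $R$. Otherwise $I$ is a nonzero proper ideal of $R$. Since $(1)\Leftrightarrow(3)$ has already been established, $I$ is a weakly 1-absorbing primary ideal of $R$; then Theorem \ref{VNI} applied to the von-Neumann regular ring $R$ gives that $I$ is a primary ideal of $R$, which in particular is a weakly primary ideal of $R$. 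So every proper ideal of $R$ is weakly primary, yielding (2).

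There is no real obstacle here beyond bookkeeping: the corollary is essentially a packaging of Theorem \ref{fi} with Theorem \ref{VNI}, using that a finite product of fields is von-Neumann regular. The only point to keep straight is the handling of the zero ideal (which lies outside the scope of Theorem \ref{VNI}, since that theorem is stated for nonzero ideals), and this is dealt with by the trivial observation that $\{0\}$ is always weakly primary.
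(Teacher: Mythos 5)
Your proposal is correct and follows exactly the route the paper intends: the corollary is stated without an explicit proof but is justified "in light of Theorem \ref{VNI} and Theorem \ref{fi}," which is precisely the cycle you run, and your separate handling of the zero ideal (outside the scope of Theorem \ref{VNI}) is a careful touch the paper leaves implicit.
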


\begin{theorem}
\label{f}Let $R_{1}$ and $R_{2}$ be commutative rings and $f:R_{1}\rightarrow
R_{2}$ be a ring homomorphism with $f(1)=1$. Then the following statements hold:
\end{theorem}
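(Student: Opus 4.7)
Since the theorem's enumerated parts are not shown in the excerpt, I proceed under the assumption that Theorem \ref{f} follows the standard homomorphism template for this class of ideals: a pullback clause, roughly ``if $J$ is a weakly 1-absorbing primary ideal of $R_{2}$ with $f^{-1}(J)\neq R_{1}$ (plus a mild hypothesis ensuring units and nonunits are respected by $f$, e.g.\ $f$ injective or local), then $f^{-1}(J)$ is a weakly 1-absorbing primary ideal of $R_{1}$'', and a pushforward clause, ``if $f$ is surjective, $\ker f\subseteq I$, and $I$ is a weakly 1-absorbing primary ideal of $R_{1}$, then $f(I)$ is a weakly 1-absorbing primary ideal of $R_{2}$''. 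Throughout I would freely use the standard identity $f^{-1}(\sqrt{J})=\sqrt{f^{-1}(J)}$ and the fact that $f(1)=1$ forces $f(u)$ to be a unit in $R_{2}$ whenever $u$ is a unit in $R_{1}$.

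For the pullback part, I would begin with nonunit $a,b,c\in R_{1}$ and $0\neq abc\in f^{-1}(J)$. Applying $f$ gives $f(a)f(b)f(c)=f(abc)\in J$; injectivity (or an equivalent mild hypothesis in the statement) transfers $abc\neq 0$ to $f(abc)\neq 0$; and the unit-tracking hypothesis keeps $f(a), f(b), f(c)$ nonunit in $R_{2}$. The definition of weakly 1-absorbing primary applied to $J$ then yields $f(ab)\in J$ or $f(c)\in\sqrt{J}$, and pulling back gives $ab\in f^{-1}(J)$ or $c\in\sqrt{f^{-1}(J)}$, as required.

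For the pushforward part, I would start with nonunit $a',b',c'\in R_{2}$ and $0\neq a'b'c'\in f(I)$. Using surjectivity, choose pre-images $a,b,c\in R_{1}$; these are automatically nonunit, because a unit in $R_{1}$ would map via $f$ to a unit in $R_{2}$, contradicting that $a',b',c'$ are nonunit. From $f(abc)=a'b'c'\neq 0$ one deduces $abc\neq 0$, and writing $a'b'c'=f(i)$ with $i\in I$ yields $abc-i\in\ker f\subseteq I$, hence $abc\in I$. The weakly 1-absorbing primary property of $I$ then gives $ab\in I$ or $c\in\sqrt{I}$, and pushing forward produces $a'b'\in f(I)$ or $c'\in\sqrt{f(I)}$, using that $f$ respects powers.

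The main obstacle in both directions is the asymmetric behaviour of units under $f$: nonunits of $R_{1}$ can map to units of $R_{2}$ even when $f$ is injective, which is exactly why the pullback clause is typically stated with an extra hypothesis forcing $f$ to preserve nonunits (for example, $f$ a local homomorphism, or the natural inclusion $R\hookrightarrow R[X]$ as in the corollary after Theorem \ref{NOUNIT}). Correspondingly, the pushforward clause collapses without $\ker f\subseteq I$, since then the implication $f(abc)\in f(I)\Rightarrow abc\in I$ fails and one cannot apply the hypothesis on $I$. The nonzero condition $0\neq abc$ is comparatively easy to police: injectivity handles the pullback, and the pushforward inherits it for free from $a'b'c'\neq 0$.
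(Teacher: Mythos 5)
Your reconstruction of the two clauses matches the paper's actual statement (pullback under a monomorphism that sends nonunits to nonunits; pushforward under an epimorphism with $\ker f\subseteq I$), and your proof is essentially the paper's own argument: apply $f$, use injectivity to preserve $abc\neq 0$ and the unit hypothesis to keep $f(a),f(b),f(c)$ nonunit, then pull back via $f^{-1}(\sqrt{J})=\sqrt{f^{-1}(J)}$; respectively lift nonunits, use $\ker f\subseteq I$ to get $abc\in I$, and push forward. If anything you are slightly more explicit than the paper at the step $f(abc)\in f(I)\Rightarrow abc\in I$, but the route is the same.
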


\begin{enumerate}
\item Suppose that $f$ is a monomorphism and $f(a)$ is a nonunit element of
$R_{2}$ for every nonunit element $a \in R_{1}$ (for example if $U(R_{2})$ is
a torsion group) and $J$ is a weakly 1-absorbing primary ideal of $R_{2}.$
Then $f^{-1}(J)$ is a weakly 1-absorbing primary ideal of $R_{1}.$

\item If $f$ is an epimorphism and $I$ is a weakly 1-absorbing primary ideal
of $R_{1}$ such that $Ker(f) \subseteq I$, then $f(I)$ is a weakly 1-absorbing
primary ideal of $R_{2}.$
\end{enumerate}

\begin{proof}
(1) Let $0\neq abc\in f^{-1}(J)$ for some nonunit elements $a,b,c\in R$. Since
$Ker(f)=0,$ we have $0\neq f(abc)=f(a)f(b)f(c)\in J$, where $f(a), f(b), f(c)$
are nonunit elements of $R_{2}$ by hypothesis. Hence $f(a)f(b)\in J $ or
$f(c)\in\sqrt{J}$. Hence $ab\in f^{-1}(J)$ or $c\in\sqrt{f^{-1}(J)}%
=f^{-1}(\sqrt{J}).$ Thus $f^{-1}(J)$ is a weakly 1-absorbing primary ideal of
$R_{1}.$

(2) Let $0\neq xyz\in f(I)$ for some nonunit elements $x,y,z\in R.$ Since $f$
is onto, there exists nonunit elements $a,b,c\in I$ such that $x=f(a),$
$y=f(b),$ $z=f(c)$. Then $f(abc)=f(a)f(b)f(c)=xyz\in f(I)$. Since $Ker(f)
\subseteq I,$ we have $0\neq abc\in I$. It follows $ab\in I$ or $c\in\sqrt{I
}$. Thus $xy\in f(I)$ or $z\in f(\sqrt{I})$. Since $f$ is onto and $Ker(f)
\subseteq I$, we have $f(\sqrt{I})$ $=\sqrt{f(I)}$. Thus we are done.
\end{proof}

The following example shows that the hypothesis in Theorem \ref{f}(1) is crucial.

\begin{example}
\label{CF} \cite[Example 1]{BadEce} Let $A=K[x,y]$, where $K$ is a field, $M =
(x, y)A$, and $B = A_{M}$. Note that $B$ is a quasilocal ring with maximal
ideal $M_{M}$. Then $I = xM_{M} = (x^{2}, xy)B$ is a 1-absorbing primary ideal
of $B$ (see \cite[Theorem 5]{BadEce}) and $\sqrt{I} = xB$. However $xy \in I$,
but neither $x \in I$ nor $y \in\sqrt{I}$. Thus $I$ is not a primary ideal of
$B$. Let $f : B\times B \rightarrow B$ such that $f(x, y) = x$. Then $f$ is a
ring homomorphism from $B\times B$ onto $B$ such that $f(1,1) = 1$. However,
$(1, 0)$ is a nonunit element of $B\times B$ and $f(1,0) = 1$ is a unit of
$B$. Thus $f$ does not satisfy the hypothesis of Theorem \ref{f}(1). Now
$f^{-1}(I) = I \times B$ is not a weakly 1-absorbing ideal of $B\times B$ by
Theorem \ref{w1}.
\end{example}

\begin{theorem}
Let $I$ be a proper ideal of $R.$ Then the following statements hold.

\begin{enumerate}
\item If $J$ is a proper ideal of a ring $R$ with $J\subseteq I$ and $I$ is a
weakly 1-absorbing primary ideal of $R$, then $I/J$ is a weakly 1-absorbing
primary ideal of $R/J$.

\item If $J$ is a proper ideal of a ring $R$ with $J\subseteq I$ such that
$U(R/J) = \{a + J \mid a \in U(R)\}$. If $J$ is a 1-absorbing primary ideal of
$R$ and $I/J$ is a weakly 1-absorbing primary ideal of $R/J$, then $I$ is a
1-absorbing primary ideal of $R$.

\item If $\{0\}$ is a 1-absorbing primary ideal of $R$ and $I$ is a weakly
1-absorbing primary ideal of $R$, then $I$ is a 1-absorbing primary ideal of
$R$.

\item If $J$ is a proper ideal of a ring $R$ with $J\subseteq I$ such that
$U(R/J) = \{a + J \mid a \in U(R)\}$. If $J$ is a weakly 1-absorbing primary
ideal of $R$ and $I/J$ is a weakly 1-absorbing primary ideal of $R/J$, then
$I$ is a weakly 1-absorbing primary ideal of $R$.
\end{enumerate}
\end{theorem}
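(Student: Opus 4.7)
The plan for all four parts is a uniform case analysis according to whether the product $abc$ lies in $J$. Part (1) is a direct quotient argument, part (3) is the $J=\{0\}$ specialization of part (2), and parts (2) and (4) share the same outline, with ``(weakly) 1-absorbing primary'' attached to $J$ accordingly.

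For part (1), I would start with nonunits $\bar a,\bar b,\bar c\in R/J$ satisfying $0\neq\bar a\bar b\bar c\in I/J$, and choose lifts $a,b,c\in R$. These lifts are necessarily nonunits, since any unit of $R$ projects to a unit of $R/J$. The condition $\overline{abc}\neq 0$ rules out $abc\in J$, so in particular $abc\neq 0$, and combined with $abc\in I$, the weakly 1-absorbing primary property of $I$ gives $ab\in I$ or $c\in\sqrt{I}$. Using the identity $\sqrt{I/J}=\sqrt{I}/J$, this descends to $\bar a\bar b\in I/J$ or $\bar c\in\sqrt{I/J}$.

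For parts (2) and (4), I would take nonunits $a,b,c\in R$ with $abc\in I$ (together with $abc\neq 0$ in part (4)) and split on membership of $abc$ in $J$. If $abc\in J$: for part (2), the 1-absorbing primary property of $J$ yields $ab\in J\subseteq I$ or $c\in\sqrt{J}\subseteq\sqrt{I}$; for part (4), since $abc\neq 0$, the weakly 1-absorbing primary property of $J$ gives the same conclusion. If $abc\notin J$: the image $\bar a\bar b\bar c$ is a nonzero element of $I/J$, and the hypothesis $U(R/J)=\{a+J:a\in U(R)\}$ is invoked to ensure that $\bar a,\bar b,\bar c$ remain nonunits in $R/J$; the weakly 1-absorbing primary property of $I/J$ then produces $\bar a\bar b\in I/J$ or $\bar c\in\sqrt{I/J}$, which lifts back to $ab\in I$ or $c\in\sqrt{I}$. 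Part (3) is the $J=\{0\}$ specialization of part (2): the unit-lifting condition is automatic and $I/\{0\}\cong I$ is weakly 1-absorbing primary by assumption; equivalently, one can argue directly by splitting on whether $abc=0$ and using that $\{0\}$ is 1-absorbing primary to handle the zero-product case.

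The principal obstacle is the careful use of the hypothesis $U(R/J)=\{a+J:a\in U(R)\}$ in parts (2) and (4) to conclude that nonunits of $R$ have nonunit images in $R/J$, so that the triple $(\bar a,\bar b,\bar c)$ is admissible in the definition of weakly 1-absorbing primary for $I/J$; once this point is granted, the remaining manipulations are routine bookkeeping.
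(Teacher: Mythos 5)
Your proposal is correct and follows essentially the same route as the paper: parts (2) and (4) are proved by the identical case split on whether $abc\in J$, using the hypothesis $U(R/J)=\{a+J\mid a\in U(R)\}$ exactly as the paper does to keep $a+J,b+J,c+J$ nonunit, and part (3) is obtained as the $J=\{0\}$ specialization of (2). The only cosmetic difference is in part (1), where the paper simply invokes Theorem \ref{f}(2) for the natural surjection $\pi\colon R\to R/J$ while you unfold that homomorphism argument directly; the content is the same.
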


\begin{proof}
(1) Consider the natural epimorphism $\pi:R\rightarrow R/J$. Then
$\pi(I)=I/J.$ So we are done by Theorem \ref{f} (2).

(2) Suppose that $abc\in I$ for some nonunit elements $a,b,c\in R$. If $abc\in
J$, then $ab\in J\subseteq I$ or $c\in\sqrt{J}\subseteq\sqrt{I}$ as $J$ is a
1-absorbing primary ideal of $R$. Now assume that $abc\notin J$. Then
$J\neq(a+J)(b+J)(c+J)\in$ $I/J$, where $a + J, b + J, c + J$ are nonunit
elements of $R/J$ by hypothesis. Thus $(a+J)(b+J)\in I/J$ or $(c+J)\in
\sqrt{I/J}.$ Hence $ab\in I$ or $c\in\sqrt{I}.$

(3) The proof follows from (2).

(4) Suppose that $0 \not = abc\in I$ for some nonunit elements $a,b,c\in R$.
If $abc\in J$, then $ab\in J\subseteq I$ or $c\in\sqrt{J}\subseteq\sqrt{I}$ as
$J$ is a weakly 1-absorbing primary ideal of $R$. Now assume that $abc\notin
J$. Then $J\neq(a+J)(b+J)(c+J)\in$ $I/J$, where $a + J, b + J, c + J$ are
nonunit elements of $R/J$ by hypothesis. Thus $(a+J)(b+J)\in I/J$ or
$(c+J)\in\sqrt{I/J}.$ Hence $ab\in I$ or $c\in\sqrt{I}.$
\end{proof}

In the following remark, we give the correct version of \cite[Theorem 17(1),
Corollary 3 and Corollary 4]{BadEce}.

\begin{remark}
\label{TT} Mohammed Tamekkante pointed out to the first-named author that in
\cite{BadEce}, we overlooked the fact that if $f: R_{1}\rightarrow R_{2}$ is a
ring homomorphism such that $f(1) = 1$, then it is possible that $f(a) \in
U(R_{2})$ for some nonunit element $a \in R_{1}$. Overlooking this fact caused
a problem in the proof of \cite[Theorem 17(1), Corollary 3 and Corollary
4]{BadEce}. We state the correct version of \cite[Theorem 17(1), Corollary 3
and Corollary 4]{BadEce}.

\begin{enumerate}
\item (\cite[Theorem 17(1)]{BadEce}). Let $R_{1}$ and $R_{2}$ be commutative
rings and $f:R_{1}\rightarrow R_{2}$ be a ring homomorphism with $f(1)=1$ such
that if $R_{2}$ is not a quasilocal ring, then $f(a)$ is a nonunit element of
$R_{2}$ for every nonunit element $a \in R_{1}$ (for example if $U(R_{2})$ is
a torsion group) and $J$ is a 1-absorbing primary ideal of $R_{2}. $ Then
$f^{-1}(J)$ is a 1-absorbing primary ideal of $R_{1}.$ (note that if $R_{2}$
is not a quasilocal ring, then $J$ is primary by \cite[Theorem 3]{BadEce}).

\item (\cite[Corollary 3]{BadEce}). Let $I$ and $J$ be proper ideals of a ring
R with $I \subseteq J$. If $J$ is a 1-absorbing primary ideal of $R$, then
$J/I$ is a 1-absorbing primary ideal of $R/I$. Furthermore, assume that if
$R/I$ is a quasilocal ring, then $U(R/I) = \{a + I \mid a \in U(R)\}$. If
$J/I$ is a 1-absorbing primary ideal of $R/I$, then $J$ is a 1-absorbing
primary ideal of $R$.

\item (\cite[Corollary 4]{BadEce}). Let $R$ be a ring and $A = R[x]$. Then a
proper ideal $I$ of $R$ is a 1-absorbing primary ideal of $R$ if and only if
$(I[x] + xA)/xA$ is a 1-absorbing primary ideal of $A/xA$ (The claim is clear
since $R$ is ring-isomorphic to $A/xA$)

Note that Example \ref{CF} shows that the hypothesis in (1) is crucial.
\end{enumerate}
\end{remark}

\begin{theorem}
\label{S}Let $S$ be a multiplicatively closed subset of $R,$ and $I$ a proper
ideal of $R$.\ Then the following statements hold.
\end{theorem}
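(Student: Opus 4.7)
Based on the introduction, the theorem presumably has two parts: (i) if $I$ is a weakly 1-absorbing primary ideal of $R$ and $S\cap I=\emptyset$, then $S^{-1}I$ is a weakly 1-absorbing primary ideal of $S^{-1}R$; and (ii) conversely, if $S^{-1}I$ is a weakly 1-absorbing primary ideal of $S^{-1}R$ and $S\cap Z(R)=S\cap Z_I(R)=\emptyset$, then $I$ is a weakly 1-absorbing primary ideal of $R$.

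For part (i), I would start with nonunit elements $a_1/s_1,a_2/s_2,a_3/s_3\in S^{-1}R$ whose product is a nonzero element of $S^{-1}I$. The key first observation is that each $a_i$ must then be a nonunit of $R$, since a unit $a_i\in R$ has inverse $a_i^{-1}\in R$ and $(a_i/s_i)\cdot(s_i a_i^{-1}/1)=1/1$, making $a_i/s_i$ a unit in $S^{-1}R$. Unpacking the definition of $S^{-1}I$ gives $v\in S$ with $v a_1 a_2 a_3\in I$, and the nonzero hypothesis in $S^{-1}R$ forces $v a_1 a_2 a_3\neq 0$; moreover $v a_1$ is still a nonunit of $R$ because $a_1$ is. Applying the weakly 1-absorbing primary property of $I$ to the triple $(v a_1, a_2, a_3)$ yields either $v a_1 a_2\in I$, whence $(a_1 a_2)/(s_1 s_2)=(v a_1 a_2)/(v s_1 s_2)\in S^{-1}I$, or $a_3\in\sqrt I$, whence $a_3/s_3\in\sqrt{S^{-1}I}$.

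For part (ii), the two disjointness hypotheses first yield the clean translations $x/1\in S^{-1}I\Leftrightarrow x\in I$ and $x/1\in\sqrt{S^{-1}I}\Leftrightarrow x\in\sqrt I$ for every $x\in R$: in the nontrivial direction, $x/1\in S^{-1}I$ gives $vx\in I$ for some $v\in S$, and if $x\notin I$ then $v\in S\cap Z_I(R)=\emptyset$, absurd. Given nonunit $a,b,c\in R$ with $0\neq abc\in I$, the image $abc/1$ lies in $S^{-1}I$ and is nonzero in $S^{-1}R$ (else some $v\in S$ kills $abc$, forcing $v\in S\cap Z(R)$). When $a/1,b/1,c/1$ are all nonunits of $S^{-1}R$, the weakly 1-absorbing primary property of $S^{-1}I$ together with the translations gives $ab\in I$ or $c\in\sqrt I$.

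The main obstacle is the case where one of $a/1,b/1,c/1$ is a unit of $S^{-1}R$ even though $a,b,c$ are nonunits of $R$. If $c/1$ is a unit, multiplying $abc/1\in S^{-1}I$ by $(c/1)^{-1}$ gives $ab/1\in S^{-1}I$, hence $ab\in I$ directly. If $a/1$ is a unit, pick $y\in R$ with $ay\in S$ and compute $(ay)(bc)=y(abc)\in I$; then $ay\in S\subseteq R\setminus Z_I(R)$ forces $bc\in I$. Assuming $c\notin\sqrt I$ (otherwise done), $c\notin I$ combined with $bc\in I$ gives $b\in Z_I(R)$; an easy inheritance argument shows that the saturation $\{r\in R:ry\in S\text{ for some }y\in R\}$ still avoids $Z_I(R)$, so $b/1$ must itself be a nonunit of $S^{-1}R$. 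Producing a suitable triple of nonunits of $S^{-1}R$ whose analysis via the weakly 1-absorbing primary property of $S^{-1}I$ forces $ab\in I$ is the genuinely delicate closing step of the proof.
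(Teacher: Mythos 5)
Your part (1) is correct and is essentially the paper's own argument: clear denominators, observe that $va$ is still a nonunit of $R$ for $v\in S$, and apply the hypothesis on $I$ to the triple $(va,b,c)$. For part (2), be aware that your ``main case'' (where $a/1$, $b/1$, $c/1$ are all nonunits of $S^{-1}R$) is the \emph{entirety} of the paper's proof: the paper silently treats $a/1,b/1,c/1$ as nonunits of $S^{-1}R$ merely because $a,b,c$ are nonunits of $R$, which is false in general. So the obstacle you isolate is real and is not addressed in the paper at all. Your disposal of the case where $c/1$ is a unit is correct, as is your reduction in the case where $a/1$ is a unit (to: $bc\in I$, and, since $(I:a)=I$, the goal $ab\in I$ becomes $b\in I$).

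However, the ``genuinely delicate closing step'' cannot be closed, because statement (2) is false, and the counterexample sits exactly in the case where you are stuck. Take $R=K[x,y]$, $I=(x^{2},xy)=x(x,y)$, and $S=R\setminus(x,y)$. Then $S\cap Z(R)=\emptyset$, and since $I=(x)\cap(x,y)^{2}$ has associated primes $(x)$ and $(x,y)$, one gets $Z_{I}(R)=(x,y)$, so $S\cap Z_{I}(R)=\emptyset$ as well. Moreover $S^{-1}I=(x^{2},xy)B$ with $B=K[x,y]_{(x,y)}$ is a 1-absorbing primary (hence weakly 1-absorbing primary) ideal of $S^{-1}R=B$ --- this is precisely the paper's Example \ref{CF}. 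Yet for the nonunits $a=x+1$, $b=x$, $c=y$ of $R$ we have $0\neq abc=(x+1)xy\in I$, while $ab=x(x+1)\notin x(x,y)=I$ and $c=y\notin\sqrt{I}=(x)$, so $I$ is not weakly 1-absorbing primary in $R$. The failure is exactly your problematic configuration: $a=x+1$ lies in $S$ and becomes a unit of $S^{-1}R$, one finds $bc=xy\in I$ and $b=x\in Z_{I}(R)$, but $b\notin I$. So your write-up is incomplete as a proof, but the missing step is not a missing trick on your part; it is an error in the theorem and in the paper's own proof of it.
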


\begin{enumerate}
\item If $I$ is a weakly 1-absorbing primary ideal of $R$ such that $I\cap
S=\emptyset$, then $S^{-1}I\ $is a weakly 1-absorbing primary ideal of
$S^{-1}R.$

\item If $S^{-1}I$ is a weakly 1-absorbing primary ideal of $S^{-1}R$ such
that $S\cap Z(R)=$ $\emptyset$ and $S\cap Z_{I}(R)=\emptyset$, then $I\ $is a
weakly 1-absorbing primary ideal of $R.$
\end{enumerate}

\begin{proof}
(1) Suppose that $0\neq\frac{a}{s_{1}}\frac{b}{s_{2}}\frac{c}{s_{3}}\in
S^{-1}I$ for some nonunit $a,b,c\in R\setminus S,$ $s_{1},s_{2},s_{3}\in S$
and $\frac{a}{s_{1}}\frac{b}{s_{2}}\notin S^{-1}I$. Then $0\neq uabc\in I$ for
some $u\in S$. Since $I$ is weakly 1-absorbing primary and $uab\notin I$, we
conclude $c\in\sqrt{I}$. Thus $\frac{c}{s_{3}}\in S^{-1}\sqrt{I}=\sqrt
{S^{-1}I}$. Thus $S^{-1}I\ $is a weakly 1-absorbing primary ideal of $S^{-1}R.
$

(2) Suppose that $0\neq abc\in I$ for some nonunit elements $a,b,c\in R$.
Hence $0\neq\frac{abc}{1}=\frac{a}{1}\frac{b}{1}\frac{c}{1}\in S^{-1}I$ as
$S\cap Z(R)=$ $\emptyset$ . Since $S^{-1}I$ is weakly 1-absorbing primary, we
have either $\frac{a}{1}\frac{b}{1}\in S^{-1}I$ or $\frac{c}{1}\in\sqrt
{S^{-1}I}=S^{-1}\sqrt{I}$. If $\frac{a}{1}\frac{b}{1}\in S^{-1}I,$ then
$uab\in I$ for some $u\in S.$ Since $S\cap Z_{I}(R)=\emptyset$, we conclude
that $ab\in I.$ If $\frac{c}{1}\in S^{-1}\sqrt{I},$ then $(tc)^{n}\in I$ for
some positive integer $n\geq1$ and $t\in S$. Since $t^{n}\notin Z_{I}(R)$, we
have $c^{n}\in I,$ i.e., $c\in\sqrt{I}$. Thus $I\ $is a weakly 1-absorbing
primary ideal of $R.$
\end{proof}

\begin{definition}
Let $I$ be a weakly 1-absorbing primary ideal of $R$ and $I_{1}I_{2}%
I_{3}\subseteq I$ for some proper ideals $I_{1},I_{2},I_{3}$ of $R$. If
$(a,b,c)$ is not 1-triple zero of $I$ for every $a\in I_{1},$ $b\in I_{2},$
$c\in I_{3}$, then we call $I$ a free 1-triple zero with respect to
$I_{1}I_{2}I_{3}.$
\end{definition}

\begin{theorem}
\label{tri}Let $I$ be a weakly 1-absorbing primary ideal of $R$ and $J$ be a
proper ideal of $R$ with $abJ\subseteq I$ for some $a,b\in R$. If $(a,b,j)$ is
not a 1-triple zero of $I$ for all $j\in J$ and $ab\notin I,$ then
$J\subseteq\sqrt{I}.$
\end{theorem}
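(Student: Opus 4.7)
The plan is to pick an arbitrary element $j \in J$ and show that $j \in \sqrt{I}$; since $J$ is proper, such a $j$ is automatically nonunit, and I will treat $a,b$ as nonunit (as is implicit in the definition of a 1-triple zero). The hypothesis $abJ \subseteq I$ immediately yields $abj \in I$, and the whole argument will then split on whether this product vanishes.

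In the first case $abj \neq 0$, the nonunit triple $(a,b,j)$ meets the defining hypothesis of a weakly 1-absorbing primary ideal, so $ab \in I$ or $j \in \sqrt{I}$; since $ab \notin I$ by assumption, we must have $j \in \sqrt{I}$. In the second case $abj = 0$, the triple $(a,b,j)$ already satisfies two of the three defining conditions of a 1-triple zero of $I$, namely $abj = 0$ and $ab \notin I$; the hypothesis that $(a,b,j)$ is \emph{not} a 1-triple zero therefore forces the remaining condition $j \notin \sqrt{I}$ to fail, giving $j \in \sqrt{I}$ again.

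Combining the two cases, $j \in \sqrt{I}$ for every $j \in J$, and hence $J \subseteq \sqrt{I}$, as required. I do not anticipate any serious obstacle: the proof is a clean two-case unpacking of the definitions of weakly 1-absorbing primary ideal and 1-triple zero, hinged on the dichotomy $abj = 0$ versus $abj \neq 0$. The only subtlety worth recording is that a 1-triple zero is defined only for nonunit triples, so when $a$ or $b$ happens to be a unit the ``not a 1-triple zero'' hypothesis is vacuously true; in such situations one should either regard the statement as quantifying over nonunit $a,b$ or handle the unit case directly (for instance, if $a$ is a unit then $abJ \subseteq I$ and $ab \notin I$ reduce to $bJ \subseteq I$ and $b \notin I$, and one argues analogously).
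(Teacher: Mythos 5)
Your proof is correct and follows essentially the same route as the paper's: the same dichotomy on whether $abj$ vanishes, applying the weakly 1-absorbing primary condition when $abj\neq 0$ and the ``not a 1-triple zero'' hypothesis when $abj=0$ (the paper merely packages this as a proof by contradiction, choosing $c\in J\setminus\sqrt{I}$ and refuting both cases). Your closing caveat about $a,b$ needing to be nonunit is a fair observation about an implicit assumption that the paper also makes without comment.
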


\begin{proof}
Suppose that $J\nsubseteq\sqrt{I}.$ Then there exists $c\in J$ $\backslash
\sqrt{I}$. Then $abc\in abJ\subseteq I$. If $abc\neq0$, then it contradicts
our assumption that $ab\notin I$ and $c\notin\sqrt{I}.$ Thus $abc=0$. Since
$(a,b,c)$ is not a 1-triple zero of $I$ and $ab\notin I$, we conclude
$c\in\sqrt{I},$ a contradiction. Thus $J\subseteq\sqrt{I}.$
\end{proof}

\begin{theorem}
\label{free}Let $I$ be a weakly 1-absorbing primary ideal of $R$ and $0\neq
I_{1}I_{2}I_{3}\subseteq I$ for some proper ideals $I_{1},I_{2},I_{3}$ of $R$.
If $I$ is free 1-triple zero with respect to $I_{1}I_{2}I_{3}$, then
$I_{1}I_{2}\subseteq I$ or $I_{3}\subseteq\sqrt{I}.$
\end{theorem}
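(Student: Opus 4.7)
The plan is to argue by contradiction and reduce everything to a single application of Theorem~\ref{tri}. Assume, toward a contradiction, that $I_{1}I_{2}\not\subseteq I$ and $I_{3}\not\subseteq\sqrt{I}$. The first order of business is to extract an individual witness from $I_{1}I_{2}\not\subseteq I$: since the product ideal $I_{1}I_{2}$ is generated by elementary products $xy$ with $x\in I_{1}$ and $y\in I_{2}$, were every such product in $I$ we would get $I_{1}I_{2}\subseteq I$; hence there must exist $a\in I_{1}$ and $b\in I_{2}$ with $ab\notin I$. Because $I_{1}$ and $I_{2}$ are proper ideals, $a$ and $b$ are automatically nonunit elements of $R$, which will matter when invoking the 1-triple-zero terminology.

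Next, observe that $abI_{3}\subseteq I_{1}I_{2}I_{3}\subseteq I$, placing us exactly in the setup of Theorem~\ref{tri} with $J=I_{3}$. To apply that theorem I need to verify that $(a,b,c)$ is not a 1-triple zero of $I$ for every $c\in I_{3}$. For nonunit $c\in I_{3}$ this is precisely the ``free 1-triple zero with respect to $I_{1}I_{2}I_{3}$'' hypothesis applied to the triple $(a,b,c)$; for units $c$ (if any) the condition holds vacuously, because the definition of 1-triple zero requires all three entries to be nonunits. Theorem~\ref{tri} therefore delivers $I_{3}\subseteq\sqrt{I}$, contradicting the standing assumption and completing the proof.

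There is essentially no obstacle here; the argument is bookkeeping plus one appeal to Theorem~\ref{tri}. The only mildly delicate point is the first step, where one must remember that $I_{1}I_{2}\not\subseteq I$ forces an individual product $ab$ (rather than merely a sum of such products) to lie outside $I$. The hypothesis $0\neq I_{1}I_{2}I_{3}$ does not appear to play a role in this argument, and is presumably included to keep the statement parallel with other triple-zero results in the paper.
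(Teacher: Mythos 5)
Your proposal is correct and follows essentially the same route as the paper: extract $a\in I_{1}$, $b\in I_{2}$ with $ab\notin I$, use the free 1-triple-zero hypothesis to see that $(a,b,c)$ is never a 1-triple zero for $c\in I_{3}$, and invoke Theorem~\ref{tri} to get $I_{3}\subseteq\sqrt{I}$. Your added justifications (why a single product $ab$ outside $I$ must exist, and that elements of proper ideals are nonunits) are correct refinements of steps the paper leaves implicit, and your observation that the hypothesis $0\neq I_{1}I_{2}I_{3}$ is not actually used in the argument is also accurate.
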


\begin{proof}
Suppose that $I$ is free 1-triple zero with respect to $I_{1}I_{2}I_{3}$, and
$0\neq I_{1}I_{2}I_{3}\subseteq I$. Assume that $I_{1}I_{2}\nsubseteq I.$ Then
there exist $a\in I_{1}$, $b\in I_{2}$ such that $ab\notin I$. Since $I$ is a
free 1-triple zero with respect to $I_{1}I_{2}I_{3},$ we conclude that
$(a,b,c)$ is not a 1-triple zero of $I$ for all $c\in I_{3}$. Thus
$I_{3}\subseteq\sqrt{I}$ by Theorem \ref{tri}.
\end{proof}

\textbf{Acknowledgement}. We would like to thank Mohammed Tamekkante for
pointing out a problem in \cite[Theorem 17(1)]{BadEce}.

\end{document}